\newcommand\redout{\bgroup\markoverwith
	{\textcolor{red}{\rule[0.5ex]{2pt}{0.8pt}}}\ULon}
\definecolor{auburn}{rgb}{0.43, 0.21, 0.1}
\definecolor{atomictangerine}{rgb}{1.0, 0.6, 0.4}
\newtheorem{theorem}{Theorem}[section]
\newtheorem{definition}[theorem]{Definition}
\newtheorem{lemma}[theorem]{Lemma}
\newtheorem{prop}{Proposition}
\newtheorem{remark}[theorem]{Remark}
\newtheorem{example}[theorem]{Example}
\newtheorem{thm}{Theorem}[section]
\DeclareMathOperator{\hess}{Hess}
\DeclareMathOperator{\cl}{cl}
\DeclareMathOperator{\id}{Id}
\DeclareMathOperator{\intt}{R_b}
\DeclareMathOperator{\cat}{cat}
\DeclareMathOperator{\dist}{dist}
\DeclareMathOperator{\codim}{codim}
\title{$C^1$ perturbations of a continuum of critical points}
\author{R. Ortega and A.J. Ure\~na}
\begin{document}
\maketitle

\begin{abstract}
	Given a real-valued function having a nondegenerate compact manifold of critical points, some of these points survive under small $C^2$-perturbations. This is a well-known result in critical point theory. In \cite{wei2}, Weinstein obtained the analogous conclusions when the perturbation is only $C^1$ and the ambient space is a finite-dimensional manifold. In this work we present a complete proof for $C^1$ perturbations in infinite-dimensional Hilbert spaces. 
\end{abstract}

\section{Introduction}
Let $\Phi_*=\Phi_*(x)$ be a real valued function of class $C^2$, defined for $x$ on some open set $\Omega$ of a Hilbert space $X$. Assume that $\Omega$ contains a closed manifold\footnote{$M$ is compact, connected, and has no boundary.} $M$ composed of critical points; i.e., \begin{equation*}\nabla\Phi_*(m)=0\qquad\text{for every $m\in M$}.\tag*{{\bf [C]}}
\end{equation*}
Differentiation gives
\begin{equation*}
	T_mM\subset\ker\Big[(\hess\Phi_*)(m)\Big],
\end{equation*}
where $T_m M$ denotes the tangent space at $m\in M$ and $(\hess\Phi_*)(m)$ stands for the second derivative, interpreted as a self-adjoint linear operator. In what follows it will be assumed that the critical manifold $M$ is {\em nondegenerate}, meaning that
\begin{equation*}
	T_mM=\ker\Big[(\hess\Phi_*)(m)\Big]\tag*{\bf{[ND]}}\qquad\text{for every }m\in M\,.
\end{equation*}
 The main issue in this paper is the persistence of some critical points after small perturbations of $\Phi_*$. In other words, if $\Phi:\Omega\to\mathbb R$ is close enough to $\Phi_*$, can one ensure that $\Phi$ has some critical points near $M$?

\medbreak

The origins of this problem in the abstract Calculus of Variations can be traced back to the early work by Krasnosel'skiĭ in the fifties \cite{kra00}, \cite[Theorem 2.2, p. 332]{kra0}. He studied nonlinear eigenvalue problems of the form 
\begin{equation*}
(\nabla\Psi) z=\lambda z\,,\qquad z\in Z\,,
\end{equation*} 
where $Z$ is some Hilbert space and the functional $\Psi:Z\to\mathbb R$ has class $C^2$. Assuming that $z=0$ is a critical point of $\Psi$ then $(\lambda,z)=(0,0)$ is a trivial solution for every $\lambda\in\mathbb R$, and we are interested in bifurcations from this continuum. A well-known necessary condition for the pair $(\lambda_0,0)$ to be a bifurcation point of is that $\lambda_0$ belongs to the spectrum of the Hesssian operator $\hess\Psi(0):Z\to Z$. In this case, and under some compactness assumptions on $\nabla\Psi$, one can use a blowup argument and rescale variables near $(\lambda_0,0)$ to obtain family of functionals $\Phi_\epsilon\to\Phi_*$ such that: {\em (a).} for $\epsilon>0$ the critical points of $\Phi_\epsilon$ give rise to nontrivial solutions; {\em (b).} $\Phi_*$ has a finite-dimensional sphere of critical points.

\medbreak

A second motivation for our problem comes from the theory of Hamiltonian systems. Assuming that $H=H(q,p)$ is the Hamiltonian function associated to the system
\begin{equation*}
\dot q=\frac{\partial H}{\partial p}(q,p)\,,\qquad\dot p=-\frac{\partial H}{\partial p}(q,p)\,,\qquad p,q\in\mathbb R^N\,,
\end{equation*}
many invariant manifolds can be found when $H$ is integrable in Liouville sense. It is well-known that the phase space is foliated by invariant tori, and in superintegrable cases other invariant manifolds with more intricate topology can appear. In some cases such as the Kepler problem one may even find continuos families of closed orbits, and a classical problem going back to Poincar\'e is the existence of closed orbits after small perturbations of the system. The connection between this question and our initial abstract problem comes from the well-known fact that the search of closed orbits of Hamiltonian systems admits a variational formulation on the fractional Sobolev space $H^{1/2}(\mathbb R/\mathbb Z,\mathbb R^{2N})$. Studies in this direction go back to the seventies and include the works of Weinstein \cite{wei}, Moser \cite{mos} or Bartsch \cite{bar}. The conclusion was that the number of closed orbits of the perturbed system is linked to the topology of the invariant manifold via Lusternik-Schnirelmann theory. 

\medbreak

Going back to the abstract setting of arbitrary Hilbert spaces and functionals, the persistence of critical points after perturbation has been analyzed by several authors, including Reeken \cite{ree1, ree2} Ambrosetti, Coti-Zelati and Ekeland \cite{ACE}, Dancer \cite{dan}, Vanderbauwhede \cite{van}, Chillingworth \cite{chi}, or Chang \cite{cha0},\ \cite[Theorem 6.4, p. 135]{cha1}. The succinct argument by Chang requires the perturbed functional $\Phi$ be close to the unperturbed one $\Phi_*$ only in the $C^1$ sense, but assumes finite index of $\hess\Phi_*$ at the critical points, thus excluding the case of Hamiltonian systems. Chang's finite index condition is not needed in this paper.

\medbreak

On the other hand, Szulkin \cite{szu, szu2} and, very recently Zhao \cite{zha}, have showed the existence of closed orbits for $C^1$-small perturbations of some Hamiltonian systems. We will follow along the lines of \cite{szu}. The proof in \cite{zha} is obtained by a different technique based on Floer homology. See \cite{fravan} for the details.
 
\medbreak

The analysis of the simplest possible case already sheds some light. Assuming that $M=\{m_0\}$ is a singleton, condition {\bf [ND]} states that $m_0$ is a nondegenerate critical point of $\Phi_*$, and one expects $\Phi$ to have at least one nearby critical point. When $\Phi$ and $\Phi_*$ are close in the $C^2$ sense, this is indeed a direct consequence of the implicit function theorem. But if $\Phi$ and $\Phi_*$ are close only in the $C^1$ sense the question is subtler. If $X$ is finite-dimensional the proof follows from topological degree theory, but for $\dim(X)=+\infty$ a remarkable example by Reeken \cite[Example 4.2]{ree2} shows that $\Phi$ may have no critical points\footnote{In his discussion Reeken writes that $F$ has compact support but this is a misprint. The function with compact support is $F'$.}. Some additional compactness assumptions on $\Phi$ are thus required.

\medbreak

We close this section by quoting Weinstein (\cite[p. 273]{wei2}): 
\begin{flushleft}
	{\em I would like to stress that the theorem holds only for finite-dimensional manifolds, and that the lack of an infinite-dimensional version has been an obstacle to extending the Conley-Zehnder theorem to manifolds other than tori.}	
\end{flushleft}

\medbreak

The remaining of this paper is organized as follows. Section \ref{sec2} is devoted to present Theorem \ref{th1}, which is our main result. We also include an example showing that the result does not extend to $C^0$ perturbations. Subsequently the theorem is proved through sections \ref{seci}-\ref{secf}. More precisely, sections \ref{seci}-\ref{sec41a} deal with the linear framework. Section \ref{sec5} studies the geometry of the functional near the manifold. In Section \ref{sec6.1} we recall a variant of the Lusternik-Schnirelmann category in infinite dimensions going back to Szulkin \cite{szu}. Estimating the category requires finite-dimensional approximations and this work is carried out in sections \ref{sec7.1}-\ref{secf}.
In order to streamline the presentation we have selected five auxiliary results, labeled as propositions  \ref{lem4.1}-\ref{corro}, all which play a role in the proof but whose proofs are postponed to the Appendix (Section \ref{App}), at the end of the paper.

\section{The main result}\label{sec2}

 In what follows $X$ stands for a separable Hilbert space and the linear operator $L:X\to X$ is continuous, selfadjoint and Fredholm. Both $X$ and $L$ will be fixed throughout the paper. We recall that $L$ being Fredholm means that 
 $$\dim(\ker L)<\infty,\qquad L(X)\text{ closed in $X$},\qquad \codim(L(X))<+\infty\,.$$ Since $L$ has further been assumed selfadjoint, then $\dim(\ker L)=\codim(L(X))$, i.e., $L$ must actually be Fredholm {\em of index zero}.
 
 \medbreak
 
 Assume now that $\Omega\subset X$ is an open subset, which will also be fixed throughout the paper. Our concern is the study of critical points of functionals $\Phi:\Omega\to\mathbb R$ of the form
 \begin{equation}\label{e2}
\Phi(x):=\frac{1}{2}\langle Lx,x\rangle+\Psi(x),\qquad x\in\Omega,
\end{equation} 
for some function $\Psi\in C^1(\Omega)$ {\em with compact gradient}. Thus, the functionals under consideration result from the addition of the quadratic form associated to $L$ plus a remaining term $\Psi$ whose gradient carries closed bounded subsets of $X$ contained in $\Omega$ into relatively compact sets. This class of functionals has a long standing tradition, see \cite{benrab},\cite{conzeh},\cite[Chapter 6]{rab}, \cite[Section 4]{szu} or \cite[Section 2]{szu2}. 
\medbreak

In order to state our main multiplicity result we recall that to every compact manifold $M$ one can associate a topologically invariant integer called its {\em $\mathbb Z_2$-cuplength}\footnote{We use $\mathbb Z_2$ coefficients in order to avoid taking care of orientability issues.} and denoted by $\cl(M)$ (we shall ignore the $\mathbb Z_2$-affix in what follows). It satisfies $1\leq\cl(M)\leq\dim(M)$ and roughly speaking, measures the topological complexity of $M$. For instance, the cuplength of the $N$-sphere $\mathbb S^N$ is $1$ if $N\geq 1$, the cuplength of the $N$-torus is $N$, and $\cl(M_1\times M_2)\geq\cl(M_1)+\cl(M_2)$ for any compact manifolds $M_1,M_2$ (\cite[Lemma 5.15, p. 161]{sch}). The cuplength is related to the Lusternik-Schnirelmann category by the inequality:
\begin{equation}\label{e4}
\cat(M)\geq\cl(M)+1,
\end{equation} 
see, e.g., \cite[Corollary, p. 161]{sch}. In particular, every continuously-differentiable function $f:M\to\mathbb R$ always has at least $\cl(M)+1$ critical points. The main result of these notes is the following:

\begin{thm}\label{th1}{Let the unperturbed functional
		$\Phi_*:\Omega\to\mathbb R$ be as in \eqref{e2} for some function $\Psi_*\in C^2(\Omega)$ with compact gradient. If there exists a finite-dimensional, compact, $C^1$-submanifold without boundary $M\subset\Omega$ with {\bf [C]}-{\bf [ND]}, then there is some $\epsilon>0$ such that, whenever $\Psi\in C^1(\Omega)$ has compact gradient  and satisfies
		\begin{equation}\label{e1}
		\|\nabla\Psi(x)-\nabla\Psi_*(x)\|<\epsilon\ \forall x\in\Omega,		
		\end{equation}
			then the corresponding functional $\Phi$ (defined as in \eqref{e2}),
		has at least $1+\cl(M)$ different critical points in $\Omega$.}
\end{thm}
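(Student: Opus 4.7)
The plan is to follow the variational strategy of Szulkin \cite{szu}: localize the problem to a tubular neighborhood of $M$, solve the ``normal'' part of the critical point equation so as to produce a locally invariant set homotopy equivalent to $M$, and then apply a Lusternik-Schnirelmann argument on that set. Since $\cat(M)\geq\cl(M)+1$ by \eqref{e4}, the topology of $M$ will deliver the required number of critical points once the reduction has been carried out.

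First I would set up the linear framework. For each $m\in M$ the Hessian $\hess\Phi_*(m)=L+\hess\Psi_*(m)$ is self-adjoint and Fredholm of index zero ($\hess\Psi_*$ being compact), and by $[\mathbf{ND}]$ its kernel is exactly $T_mM$. Using compactness of $M$ I would show that this family of Hessians admits a uniform spectral gap on the orthogonal complements, so that the splittings $X=T_mM\oplus N_m$ with $N_m=(T_mM)^\perp$ glue into a tubular neighborhood $U$ of $M$ in which every $x\in U$ decomposes uniquely as $x=m+n$, $m\in M$, $n\in N_m$, $\|n\|<\delta$, depending continuously on $x$.

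Next I would try to solve the normal equation $P_{N_m}\nabla\Phi(m+n)=0$ to produce, for each $m\in M$, a nearby critical point in its normal slice. In the $C^2$ setting the implicit function theorem applied to the invertible operator $P_{N_m}\hess\Phi_*(m)|_{N_m}$ would finish the job. Under mere $C^1$-closeness this tool is unavailable, and Reeken's example shows that the compactness of $\nabla\Psi$ is indispensable. Instead I would follow \cite{szu} and construct, via a pseudo-gradient flow whose trajectories are trapped inside the normal slices, a continuous ``slow manifold'' $\widetilde M\subset U$ homotopy equivalent to $M$ on which the normal component of $\nabla\Phi$ vanishes. Finite-dimensional approximations, justified by the compactness of $\nabla\Psi$ and suggested by the paper's outline (sections \ref{sec7.1}--\ref{secf}), would be used to make these deformation arguments rigorous despite the infinite dimensionality of $X$.

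To conclude, I would restrict $\Phi$ to $\widetilde M$ and invoke the Lusternik-Schnirelmann theorem in the Szulkin category variant (section \ref{sec6.1}). Since $\widetilde M\simeq M$, one has $\cat(\widetilde M)=\cat(M)\geq\cl(M)+1$, producing at least $\cl(M)+1$ critical points of $\Phi|_{\widetilde M}$ in $\Omega$; because the normal component of $\nabla\Phi$ already vanishes on $\widetilde M$ by construction, these are genuine critical points of $\Phi$. The main obstacle, as anticipated, is the construction of $\widetilde M$ under the sole $C^1$ hypothesis: without Lipschitz control on $\nabla\Psi-\nabla\Psi_*$ there is no contraction argument available, and one must combine the quantitative invertibility of $\hess\Phi_*|_{N_m}$ (uniform in $m$) with the compactness of $\nabla\Psi$ in a delicate way in order to obtain a deformation retraction onto the desired critical locus.
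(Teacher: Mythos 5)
Your proposal essentially reproduces the Lyapunov--Schmidt strategy: split the normal bundle, solve the normal part of $\nabla\Phi=0$ to get a ``slow manifold'' $\widetilde M\simeq M$, and then do Lusternik--Schnirelmann on $\widetilde M$. The fatal step is the construction of $\widetilde M$. In the $C^2$ case one solves $P_{N_m}\nabla\Phi(m+n)=0$ for $n=n(m)$ by the implicit function theorem, but here the implicit function theorem is precisely what is unavailable, and the equation may have a whole continuum of solutions, none, or no continuous selection. Your proposed substitute --- a pseudo-gradient flow trapped in the normal slices converging to $\widetilde M$ --- works when $\hess\Phi_*(m)|_{N_m}$ is definite (one flows to a minimum/maximum in each slice), but in the strongly indefinite situation that this paper is designed to handle, the slice dynamics is a linear saddle: trajectories are repelled along $X_m^-$ and contracted along $X_m^+$, and there is no attracting or normally hyperbolic invariant set to land on. Nothing guarantees the existence of a compact $\widetilde M\simeq M$ on which the normal component of $\nabla\Phi$ vanishes, and the paper deliberately never constructs such an object; indeed it remarks in Section~\ref{sec5} that finite-dimensional reductions of this type would lose the multiplicity count in the limit.

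The paper's actual route sidesteps $\widetilde M$ completely. It builds a \emph{fibered saddle neighborhood} $B=B_K(r^-,r^+)$ adapted to a further $C^1$-regularized splitting $N_m=X_m^-\oplus X_m^+$ (Propositions~\ref{prop3}--\ref{lem212}, Lemma~\ref{prop2}); shows directly --- without any reduction --- that $\Phi$ has at least $\cat^*(\bar B)$ critical points in $B$ via Szulkin's $*$-category and a deformation lemma along flows of the rigid form $\exp[\theta L]x+C$ (Lemmas~\ref{deflem}--\ref{proplsx}); and only then uses finite-dimensional approximation (Proposition~\ref{corro}, Lemma~\ref{papx}) to bound $\cat^*(\bar B)\geq 1+\cl(M)$ through the Thom isomorphism applied to the disc/sphere bundle pair $(E,E_0)$ in $\mathfrak Y^-$. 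So the finite-dimensional bundles enter only to estimate a relative category from below, never to locate critical points, which is precisely where your plan has no purchase. If you want to rescue your outline you would need, at minimum, to replace the appeal to $\widetilde M$ with a min--max scheme over sublevel sets in $\bar B$ and to replace $\cat(M)$ by a relative category of the pair $(\bar B,\Phi_{c_0})$, which is exactly what Sections~\ref{sec6.1}--\ref{secf} do.
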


Observe that the condition of $\Phi$ having the form \eqref{e2} is relevant only when $\dim X=+\infty$, for it is clear that {\em in the finite-dimensional situation all continuously-differentiable functionals $\Phi$ can be decomposed in this way}.

\medbreak

Theorem \ref{th1} suggests that one could perhaps replace condition \eqref{e1} by the assumption that $\Phi$ is close to $\Phi_*$ in the uniform topology. Can one still show the existence of critical points of $\Phi$ near $M$? Assuming for instance that $X$ is finite-dimensional and $\hess\Phi_*(m)$ is positive definite on $(T_mM)^\bot$ for every $m\in M$, then for any sufficiently-small neighborhood $B$ of $M$ one will have $\inf_{B}\Phi_*<\inf_{\partial B}\Phi_*$; this inequality will be inherited by the perturbed functional $\Phi$ and one can expect that direct minimization methods will imply the existence of at least one critical point (minimum) of $\Phi$ in $B$. However, if $\hess\Phi_*$ is indefinite at the points of $M$ the result fails to be true, as the following example (with $X=\mathbb R^2$ and $M=\{(0,0)\}$) shows:
\begin{example}{\rm For each $\epsilon>0$ we consider the continuously-differentiable function $\Phi_\epsilon:\mathbb R^2\to\mathbb R$ defined by:
$$\Phi_\epsilon(x,y):=x^2-y^2+\sqrt[4]{\epsilon}\,f_\epsilon(x+y)\,,$$
where $f_\epsilon\in C^1(\mathbb R,\mathbb R)$ is the function given by
$$f_\epsilon(z):=\begin{cases}
	\sqrt{\epsilon+z}-\sqrt{\epsilon}\,,&\text{if }z\geq 0\,,\\
	-f_\epsilon(-z)\,,&\text{if }z< 0\,.
\end{cases}$$
It is clear that $\lim_{\epsilon\searrow 0}\Phi_\epsilon(x,y)=x^2-y^2$, uniformly on compact subsets of $\mathbb R^2$. However, $\Phi_\epsilon$ {\em does not have critical points on the unit ball $B_1\subset\mathbb R^2$ provided that $0<\epsilon<1/64$}. In fact, a direct computation shows that the unique critical point of $\Phi_\epsilon$ is $(x_\epsilon,y_\epsilon)=\lambda_\epsilon(-1,1)$, with $\lambda_\epsilon=1/(4\sqrt[4]{\epsilon})$.}

\end{example}

\section{Preparing the proof I: some linear functional analysis.}\label{seci} We start with an observation: if $X^-,X_0,X^+$ are closed subspaces of the Hilbert space $X$ such that $\dim X^0<\infty$ and 
\begin{equation}\label{sptY}
X=X^-\oplus X^0\oplus X^+\ \text{ (orthogonal sum),}
\end{equation} then the linear operator 
$L:X\to X$ defined by 
 \begin{equation}\label{eu12}
	L(x)=x^+-x^-,\ \text{ for all } x=x^-+x^0+x^+\in X\,,
\end{equation} is continuous, selfadjoint and Fredholm. Furthermore, 
$$L\text{ vanishes on }X^0,\qquad\text{$L$ negative def. on $X^-$},\qquad\text{$L$ positive def. on }X^+\,.$$ The proposition below states that splittings of this type are a general feature of  selfadjoint Fredholm operators.

\begin{prop}\label{lem4.1}{Let $L:X\to X$ be linear, continuous, selfadjoint and Fredholm, and set $X^0:=\ker L$. Then there exist closed $L$-invariant subspaces $X^\pm\subset X$ with \eqref{sptY} satisfying, for some $r>0$, 
\begin{equation}\label{inex}
\langle Lx^-,x^-\rangle\leq  -r\|x^-\|^2\ \forall x^-\in X^-,\qquad
\langle Lx^+,x^+\rangle\geq r\|x^+\|^2\ \forall x^+\in X^+.
\end{equation}
Furthermore, these subspaces are uniquely determined by $L$.} 	
\end{prop}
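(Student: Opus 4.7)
The plan is to use the spectral theorem for bounded selfadjoint operators, exploiting the observation that the Fredholm hypothesis on $L$ forces $0$ to be either absent from or an isolated point of $\sigma(L)$.

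The first step is to establish a spectral gap at the origin: there exists $r>0$ such that $\sigma(L)\cap(-r,r)\subset\{0\}$. Because $L$ is selfadjoint with closed range, $L(X)=(\ker L)^\perp=(X^0)^\perp$, so the restriction $L|_{(X^0)^\perp}\colon (X^0)^\perp\to (X^0)^\perp$ is a continuous bijection between Hilbert spaces. The open mapping theorem then gives a bounded inverse, hence $\|Lx\|\ge r\|x\|$ for every $x\in (X^0)^\perp$ and some $r>0$, which is exactly the claimed spectral gap applied to the reduced operator $L|_{(X^0)^\perp}$.

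Existence now follows from the Borel functional calculus. Writing $E$ for the projection-valued spectral measure of $L$, set
\[
X^-:=E\bigl((-\infty,0)\bigr)X,\qquad X^+:=E\bigl((0,+\infty)\bigr)X.
\]
These are closed, mutually orthogonal, $L$-invariant subspaces, and the identity $\chi_{(-\infty,0)}+\chi_{\{0\}}+\chi_{(0,+\infty)}\equiv 1$ on $\sigma(L)$, together with $E(\{0\})X=\ker L=X^0$, yields the orthogonal decomposition \eqref{sptY}. The estimates \eqref{inex} come from $\sigma(L|_{X^\pm})\subset \pm[r,+\infty)$: the spectral theorem applied to the real-valued multiplication operator representing $L|_{X^\pm}$ gives $\pm\langle Lx^\pm,x^\pm\rangle\ge r\|x^\pm\|^2$.

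For uniqueness, let $X=Y^-\oplus X^0\oplus Y^+$ be another decomposition satisfying the stated properties. Each summand is closed, mutually orthogonal, and $L$-invariant, so $L$ reduces as $L=L|_{Y^-}\oplus 0|_{X^0}\oplus L|_{Y^+}$, with $\sigma(L|_{Y^\pm})\subset\pm[\tilde r,+\infty)$ for some $\tilde r>0$. Applying the bounded Borel function $\chi_{(0,+\infty)}$ to both sides of this reduction yields $E((0,+\infty))=0\oplus 0\oplus I_{Y^+}$, which is the projection onto $Y^+$ along $Y^-\oplus X^0$; comparing with the construction above forces $Y^+=X^+$, and symmetrically $Y^-=X^-$. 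The most delicate ingredient is this last uniqueness argument, since it rests on the fact that the spectral projections of $L$ are intrinsic and do not depend on the particular reducing decomposition used to compute them; the existence half and the spectral gap argument are essentially standard applications of the spectral theorem and the open mapping theorem.
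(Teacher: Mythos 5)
Your proof is correct and follows essentially the same route as the paper: establish a spectral gap at $0$ via closed range plus a bounded-inverse argument, define $X^\pm$ as the spectral subspaces for the positive and negative parts of the spectrum, read off \eqref{inex} from the location of the spectrum, and prove uniqueness by showing that any competing decomposition reduces $L$ and therefore reproduces the same spectral projections. The only difference is cosmetic: you phrase things in the language of the Borel functional calculus and projection-valued measures, whereas the paper works with a Riemann--Stieltjes spectral family $\{E_\lambda\}$ and invokes uniqueness of that family; these are equivalent formulations of the spectral theorem.
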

Proposition \ref{lem4.1} is the first of the five auxiliary results whose proof is postponed to the Appendix (Section \ref{App}), at the end of the paper.

\medbreak

 	We observe that the condition of $L$ being Fredholm cannot be skipped. For instance, the continuous linear operator
	$$\mathscr T:\mathscr L^2(0,1)\to\mathscr L^2(0,1),\qquad x=x(t)\mapsto tx(t)\,,$$
		is selfadjoint, but not Fredholm. We point out that $\langle\mathscr T x,x\rangle>0$ if $x\not=0$, and in particular, $\ker\mathcal T=\{0\}$. However, the image of $\mathscr T$ is dense in $\mathscr L^2(0,1)$, and there is no $r>0$ with $\langle\mathscr Tx,x\rangle\geq r\|x\|^2$ for every $x\in\mathscr L^2(0,1)$.

\medbreak

The  linear, continuous, selfadjoint and Fredholm operator $L:X\to X$ will be fixed in what follows. Let $\mathcal K(X)$ stand for the Banach space of linear, compact and selfadjoint operators $K:X\to X$. By a theorem due to Weyl  (see, e.g. \cite[p. 367]{riebel}), compact perturbations of Fredholm operators remain Fredholm. Thus,  for every $K\in\mathcal K(X)$ the linear operator $L+K$ inherits from $L$ being continuous, selfadjoint and Fredholm, and proposition \ref{lem4.1} ensures the existence of closed, $L+K$-invariant subspaces $X^\pm_K\subset X$ such that
\begin{itemize}\item $L+K$ is negative-definite on  $X^-_K$ and positive-definite on $X^+_K$, and  \item the Hilbert space $X$ splits as in \eqref{sptY} for $X^0=X^0_K:=\ker(L+K)$.
\end{itemize}
This procedure defines maps 
$$X^-=X^-_K\,,\qquad X^0=X^0_K\,,\qquad X^+=X^+_K\,,$$ from $\mathcal K(X)$ into the Grassmannian $\mathcal G(X)$ of closed, linear subspaces $V\subset X$. 

\medbreak

The set $\mathcal G(X)$ is naturally endowed with a metric space structure defined as follows:
$$d(V_1,V_2):=\|\Pi_{V_2}-\Pi_{V_1}\|_{\mathscr L(X)}\,,$$
(here and in what follows we denote $\Pi_V:X\to V$ the continuous orthogonal projection on the subspace $V\in\mathcal G(X)$). This distance has some interesting properties. For instance, one easily checks that if $V_1,V_2,W_1,W_2$ are closed subspaces of $X$ with  $V_1\bot V_2$ and $W_1\bot W_2$ then
\begin{equation}\label{oplus}
	d(V_1\oplus V_2,W_1\oplus W_2)\leq d(V_1,W_1)+d(V_2,W_2)\,.
\end{equation}
 In addition, if $d(V,W)<1$ then the restriction of the orthogonal projection $\Pi_{W}$ to $V$ defines a topological isomorphism $V\cong W$. See, e.g., \cite[Theorem 1]{buc}.

\medbreak

The question concerning the continuity of the maps $X^\pm,X^0:\mathcal K(X)\to\mathcal G(X)$ arises. The answer is negative in general as, for instance, $X^0_K$ may abruptly increase its dimension if, say, some eigenvalues of $L+K$ suddenly vanish at some operator $K=K_*$. We must therefore be more careful and consider the subsets  
$$\mathcal K_p(X):=\{K\in\mathcal K(X):\dim(X_K^0)=p\}\subset\mathcal K(X)\,,\qquad p\geq 0\,.$$
\begin{prop}\label{lem501}{For each $p\geq 0$, the restrictions of $X^\pm, X^0$ to $\mathcal K_p(X)$ 
		are continuous.}	
\end{prop}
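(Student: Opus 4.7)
My strategy is spectral-theoretic. Since the Grassmannian distance is $d(V,W) = \|\Pi_V - \Pi_W\|$, it will suffice to prove that whenever $K_n \to K_*$ in $\mathcal K_p(X)$, the orthogonal projections $P_n^0, P_n^\pm$ onto $X^0_{K_n}, X^\pm_{K_n}$ converge in operator norm to $P_*^0, P_*^\pm$. Writing $A_K := L + K$, these are precisely the spectral projections of $A_K$ onto the parts of its spectrum lying at $\{0\}$, in $(0,+\infty)$ and in $(-\infty,0)$ respectively.

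First I would address the zero-part. Proposition \ref{lem4.1} yields some $r_* > 0$ with $\sigma(A_*) \subset (-\infty, -r_*] \cup \{0\} \cup [r_*, +\infty)$, so the circle $\Gamma = \{z \in \mathbb C : |z| = r_*/2\}$ lies in the resolvent set of $A_*$. Since $A_n \to A_*$ in norm, standard resolvent perturbation theory gives $\Gamma \subset \rho(A_n)$ for large $n$, with $(z - A_n)^{-1} \to (z - A_*)^{-1}$ uniformly on $\Gamma$. The Riesz projections
$$P_n^\Gamma := \frac{1}{2\pi i}\oint_\Gamma (z - A_n)^{-1}\, dz$$
then converge in norm to $P_*^\Gamma = P_*^0$, and the classical fact that $\|Q_1 - Q_2\| < 1$ between two projections forces equality of the dimensions of their ranges gives $\mathrm{rank}(P_n^\Gamma) = p$ for $n$ large. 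The hypothesis $K_n \in \mathcal K_p(X)$ yields $\dim \ker A_n = p$, and since $\ker A_n \subset \mathrm{range}(P_n^\Gamma)$, these two subspaces must coincide by dimension count; hence $P_n^\Gamma = P_n^0$ and so $P_n^0 \to P_*^0$.

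For the positive and negative parts I would exploit that the step above also shows $\sigma(A_n) \cap \bigl((-r_*/2, r_*/2) \setminus \{0\}\bigr) = \emptyset$ for large $n$, providing a uniform spectral gap. I would then pick a continuous $f_+: \mathbb R \to \mathbb R$ equal to $0$ on $(-\infty, 0]$, equal to $1$ on $[r_*/2, +\infty)$, and continuously interpolated between; on every $\sigma(A_n)$ and on $\sigma(A_*)$ this $f_+$ coincides with the characteristic function of $(0, +\infty)$, so $f_+(A_n) = P_n^+$ and $f_+(A_*) = P_*^+$. Uniform polynomial approximation of $f_+$ on a compact interval containing all the spectra (permissible since $\sup_n \|A_n\| < \infty$) would then yield $f_+(A_n) \to f_+(A_*)$ in operator norm; an analogous $f_-$ handles $P_n^-$.

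The delicate point is the dimension-matching step $P_n^\Gamma = P_n^0$, where the hypothesis $K_n \in \mathcal K_p(X)$ is indispensable. Without this restriction, nearby operators could have small nonzero eigenvalues splitting off from $0$ inside $\Gamma$; then $\mathrm{rank}(P_n^\Gamma)$ would strictly exceed $\dim \ker A_n$, and the continuity of $X^0$ would genuinely fail---this is precisely why the statement is restricted to each stratum $\mathcal K_p(X)$.
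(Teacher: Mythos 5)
Your argument is correct and constitutes a genuine streamlining of the paper's proof. The paper proceeds in three hands-on steps: first it proves the continuity of $X^0_K$ by choosing orthonormal bases of $\ker(L+K_n)$ and extracting convergent subsequences (using that $L$ is Fredholm and $K_*$ is compact); second it establishes a uniform spectral gap $\big(\sigma(L+K_n)\setminus\{0\}\big)\cap[-r_1,r_1]=\emptyset$ via a separate compactness-and-contradiction argument that relies on step one; third it deduces the continuity of $X^\pm_K$ from Riesz projections along contours $\gamma^\pm$ that can be held fixed thanks to step two. Your route collapses the paper's first two steps into a single Riesz-projection convergence $P_n^\Gamma\to P_*^\Gamma=P_*^0$ around a fixed circle $\Gamma$, and then extracts both the continuity of $X^0_{K_n}$ and the uniform spectral gap simultaneously from rank-stability of nearby projections together with the hypothesis $\dim\ker(L+K_n)=p$. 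The one ingredient you use implicitly and might spell out is that a Riesz projection of a selfadjoint operator around a contour symmetric in the real axis is itself selfadjoint, hence orthogonal; only then does the dimension count $\ker(L+K_n)\subset\operatorname{range}P_n^\Gamma$ with $\operatorname{rank}P_n^\Gamma=p=\dim\ker(L+K_n)$ force $P_n^\Gamma=\Pi_{X^0_{K_n}}$. For $X^\pm$ you trade the paper's Cauchy integral for continuous functional calculus plus Weierstrass approximation on $[-R,R]$ with $R=\sup_n\|L+K_n\|$ (the same uniform bound appears in the paper as \eqref{eux21}); these two devices are interchangeable here. You also correctly identify the exact role of the restriction to $\mathcal K_p(X)$, matching the paper's remark that $X^0_K$ ``may abruptly increase its dimension'' otherwise.
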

This proposition is the second auxiliary result whose proof is postponed to the appendix (Section \ref{App}), at the end of the paper. 

\section{Preparing the proof II: linear saddles at each point of $M$}\label{sec41a}
Using an idea going back to \cite[Remark 1.10]{bencapfor}, in the remaining of this paper we shall assume, without loss of generality, that the operator $L$ has the form \eqref{eu12}.
Indeed, in the general case it suffices to replace the scalar product $\langle\cdot,\cdot\rangle$ of $X$ by the equivalent one
$$(x_1|x_2):=\langle L x_1^+,x_2^+\rangle+\langle x_1^0,x_2^0\rangle-\langle Lx_1^-,x_2^-\rangle,\qquad x_i=x_i^-+x_i^0+x_i^+\in X\,.$$

From now on we fix, once and for all, the functionals $\Psi_*,\Phi_*:\Omega\to\mathbb R$, as well as the nondegenerate critical manifold $M\subset\Omega$ satisfying the conditions of Theorem \ref{th1}.  Having changed the scalar product, the notion of gradient changes. In consequence, there is an issue on the complete continuity of the gradient of $\Psi_*$ or $\Psi$ with respect to the new scalar product. This is answered by the following remark:

\begin{remark}{\rm $X$ being a Hilbert space, its scalar product defines a (canonical) topological isomorphism between $X$ and its topological dual space $X^*$. For any continuously-differentiable functional $\Psi:\Omega\to\mathbb R$, its derivative and gradient maps $$\Psi':\Omega\to X^*\,,\qquad \nabla\Psi:\Omega\to X\,,$$ are related by this isomorphism. Topological isomorphisms carry relatively compact sets into relatively compact sets, and therefore, $\nabla\Psi$  is compact if and only if $\Psi'$ shares the same property. As a consequence,  {\em the assumption of $\Psi$ having a compact gradient does not depend on the equivalent scalar product used in $X$.}}
\end{remark} 

\medbreak

Since $\nabla\Psi_*:\Omega\to X$ is compact, at each point $m\in M$ its Hessian operator $\hess\Psi_*(m):X\to X$ is compact  \cite[Theorem 17.1, p. 77]{kraszab} and so $\hess\Phi_*(m)=L+\hess\Psi_*(m)$ is a compact perturbation of $L$. By combining Proposition \ref{lem4.1} with assumption {\bf [ND]} we obtain, for each $m\in M$, the splitting
\begin{equation}\label{sptXY}
	X=X^-_m\oplus T_mM\oplus X^+_m\ \text{ (orthogonal sum),}
\end{equation}
the closed subspaces $X^\pm_m$ being $\hess\Phi_*(m)$-invariant, with $\hess\Phi_*(m)$  negative-definite on $X_m^-$ and positive-definite on $X_m^+$. We shall assume, with no loss of generality, that
\begin{equation}\label{n0z}
X_m^-\not=\{0\}, m\in M\,,
\end{equation}
as otherwise it suffices to change the sign of $L$, $\Psi_*$, $\Phi_*$, $\Psi$, $\Phi$.

\medbreak
 
In a first approach we might try to use \eqref{sptXY} as a reference system  to construct a tubular neighborhood of $M$ reflecting the saddle-like geometry of $\Phi_*$ near $M$. Setting $d:=\dim M$, the map  $\hess\Phi_*=L+\hess\Psi_*$ is continuous when seen from  $M$ into $\mathcal K_d(X)$. Thus, Proposition \ref{lem501} implies the continuity of the maps $M\to\mathcal G(X)$, $m\mapsto X_m^\pm$. We deduce that the sets
$$\mathfrak X^\pm:=\{(m,v):m\in M,\ v\in X_m^\pm\}\,,\qquad TM:=\{(m,v):m\in M,\ v\in T_mM\}\,,$$
are {\em topological} vector subbundles of $M\times X$. This smoothness is not enough for our purposes, and we need to approximate these subbundles by differentiable counterparts. It motivates the following:

\begin{definition}[Class {\text [$\mathcal C^1_d$]}] A continuously-differentiable map $$K:M\to\mathcal K(X),\qquad m\mapsto K_m\,,$$ is said to belong to the class $[\mathcal C^1_d]$ provided that
	\begin{equation}\label{cK4432}
		K_m\in\mathcal K_d(X)\,,\qquad \text{ for every }m\in M\,.	
	\end{equation}
\end{definition}

While $K_*:=\hess\Psi_*$ satisfies condition \eqref{cK4432}, it may fail to be continuously-differentiable and so it does not necessarily belong to the class $[\mathcal C^1_d]$. However, it can be  uniformly approximated by $[\mathcal C^1_d]$-maps, as a consequence of the regularization result below:
\begin{prop}\label{prop3}{For every continuous map $K:M\to\mathcal K_d(X)$ there exists a sequence 
		$$K^{(n)}:M\to\mathcal K(M)\,,\qquad m\mapsto K_m^{(n)}\,,$$
		of maps in the class $[\mathcal C^1_d]$, such that 
		$$K_m^{(n)}\to K_m\text{ as } n\to\infty,\text{ uniformly with respect to }m\in M.$$ 
	}
\end{prop}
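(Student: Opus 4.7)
The strategy splits naturally into two steps: a standard $C^0$-density argument providing a $C^1$ approximation $\tilde K^{(n)}$, which in general will not belong to $[\mathcal C^1_d]$, followed by a small finite-rank correction, defined via a Riesz projection, that forces the kernel of $L+K_m^{(n)}$ to have dimension exactly $d$ at every $m\in M$.

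For the first step, cover the compact $C^1$ manifold $M$ by finitely many coordinate charts, choose a subordinate $C^1$ partition of unity, and mollify the local representatives of $K$ inside each chart. Since $\mathcal K(X)$ is a closed linear subspace of $\mathscr L(X)$, the convex combinations produced by this construction stay in $\mathcal K(X)$; we obtain maps $\tilde K^{(n)}\in C^1(M,\mathcal K(X))$ with $\tilde K^{(n)}\to K$ uniformly on $M$.

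For the second step, the continuity of $m\mapsto L+K_m$, together with the compactness of $M$ and the fact that for each $m$ the number $0$ is an isolated eigenvalue of $L+K_m$ of finite multiplicity $d$, yields $\delta>0$ and a compact rectifiable loop $\Gamma\subset\mathbb C$ encircling $[-\delta,\delta]$ such that $\sigma(L+K_m)\cap\Gamma=\emptyset$ and $\sigma(L+K_m)\cap[-\delta,\delta]=\{0\}$ for every $m\in M$. Stability of the spectrum under small self-adjoint perturbations then guarantees that, for $n$ large, $\sigma(L+\tilde K_m^{(n)})\cap\Gamma=\emptyset$ and the Riesz projection
\[
P_m^{(n)}:=-\frac{1}{2\pi i}\oint_\Gamma\bigl(L+\tilde K_m^{(n)}-\zeta\bigr)^{-1}\,d\zeta
\]
is an orthogonal projection of constant rank $d$ for every $m\in M$. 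Differentiation of the resolvent under the integral sign shows that $P_m^{(n)}$ depends in a $C^1$ way on $m$. Define
\[
K_m^{(n)}:=\tilde K_m^{(n)}-P_m^{(n)}\bigl(L+\tilde K_m^{(n)}\bigr)P_m^{(n)}.
\]
The subtracted term is finite-rank and self-adjoint, so $K_m^{(n)}\in\mathcal K(X)$. Because the spectral subspace $E_m^{(n)}:=P_m^{(n)}(X)$ is invariant under $L+\tilde K_m^{(n)}$, a direct computation shows that $L+K_m^{(n)}$ annihilates $E_m^{(n)}$ and agrees with $L+\tilde K_m^{(n)}$ on $(E_m^{(n)})^\perp$; since the latter operator is invertible on this complement (its spectrum there lies outside $[-\delta,\delta]$), we conclude that $\ker(L+K_m^{(n)})=E_m^{(n)}$ has dimension exactly $d$, so $K^{(n)}\in[\mathcal C^1_d]$. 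Finally, the operator norm of the correction is bounded by the maximum modulus of the eigenvalues of $L+\tilde K_m^{(n)}$ lying inside $\Gamma$, and this quantity tends to zero as $n\to\infty$ uniformly in $m$; hence $K^{(n)}\to K$ uniformly on $M$.

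The chief difficulty is to force the precise dimension of the kernel throughout $M$ while simultaneously preserving the $C^1$ regularity, compactness, and self-adjointness of the correction. The Riesz projection device handles all three requirements at once: holomorphy of the resolvent in $\zeta$ combined with the $C^1$-dependence on $m$ gives the smoothness of $P_m^{(n)}$; the finite rank of $P_m^{(n)}$ gives compactness; and the spectral invariance of $E_m^{(n)}$ delivers the required kernel dimension algebraically, avoiding any implicit-function construction on the manifold.
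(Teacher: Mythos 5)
Your argument is correct and follows essentially the same route as the paper: mollify $K$ on the compact $C^1$ manifold to get $\widetilde K^{(n)}\in C^1(M,\mathcal K(X))$, then correct by projecting away the small spectral part of $L+\widetilde K^{(n)}_m$ near $0$ via a Riesz projection chosen with a uniform contour (possible by compactness of $M$). Indeed, using that the Riesz projection $P_m^{(n)}$ commutes with $L+\widetilde K_m^{(n)}$, your formula $K_m^{(n)}=\widetilde K_m^{(n)}-P_m^{(n)}(L+\widetilde K_m^{(n)})P_m^{(n)}$ gives $L+K_m^{(n)}=(L+\widetilde K_m^{(n)})(\id_X-P_m^{(n)})$, which is literally the same operator the paper defines piecewise on $V_{n,m}$ and $V_{n,m}^\bot$.
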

Given $K:M\to\mathcal K(X)$ we consider the sets $\mathfrak X_K^0,\, \mathfrak X_K^\pm\subset M\times X$ defined by
\begin{equation}
	\label{eu14}\mathfrak X^0_K:=\{(m,v):m\in M,\ v\in X_{K_m}^0\}\,,\quad\mathfrak X^\pm_K:=\{(m,v):m\in M,\ v\in X_{K_m}^\pm\}\,,
	\end{equation}
and seems reasonable to guess that  they inherit some smoothness provided that $K$ belongs to the class $[\mathcal C^1_d]$ . This is indeed the content of the following:

\begin{prop}\label{lem212}{If $K$ belongs to the class $[\mathcal C^1_d]$ then $\mathfrak X_K^0$, $\mathfrak X_{K}^\pm$ are continuously-differentiable vector subbundles of $M\times X$.}
\end{prop}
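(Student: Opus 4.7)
The plan is to reduce the proposition to the statement that the three orthogonal spectral projections
$$P^0_m := \Pi_{X^0_{K_m}}\,,\qquad P^\pm_m := \Pi_{X^\pm_{K_m}}\,,\qquad m\in M,$$
depend $C^1$-smoothly on $m\in M$ as maps into $\mathscr L(X)$. Once this is achieved, producing $C^1$ local trivializations of $\mathfrak X^0_K$, $\mathfrak X^\pm_K$ will be immediate: for each $m_0\in M$ and each symbol $\bullet\in\{0,+,-\}$, Proposition \ref{lem501} together with the property quoted after \eqref{oplus} guarantees that, for $m$ close enough to $m_0$, the restriction $P^\bullet_m|_{X^\bullet_{K_{m_0}}}\colon X^\bullet_{K_{m_0}}\to X^\bullet_{K_m}$ is a topological isomorphism. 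The assignment $(m,v)\mapsto(m,P^\bullet_m v)$ then defines the required $C^1$ local trivialization of $\mathfrak X^\bullet_K$ over a neighborhood of $m_0$.

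To treat $P^0_m$, I would argue by Riesz functional calculus. Proposition \ref{lem4.1}, applied to $L+K_{m_0}$, yields $r>0$ such that $\sigma(L+K_{m_0})\subset(-\infty,-r]\cup\{0\}\cup[r,+\infty)$; in particular, $0$ is isolated in the spectrum. Fix the circle $\gamma\subset\mathbb C$ of radius $r/2$ centered at $0$. A standard Neumann-series estimate, combined with the continuity of $m\mapsto L+K_m$, shows that $\gamma$ remains in the resolvent set of $L+K_m$ for $m$ in a small neighborhood $U$ of $m_0$, and that the resolvent is jointly continuous in $(z,m)$ and $C^1$ in $m$ for each $z\in\gamma$. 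Differentiating under the integral,
$$\widetilde P^0_m:=\frac{1}{2\pi i}\oint_{\gamma}\bigl(zI-(L+K_m)\bigr)^{-1}\,dz\,,\qquad m\in U,$$
is then a $C^1$ family of bounded selfadjoint projections. Its rank, being integer-valued and continuous, is locally constant and equal to $d$ at $m_0$; since $0$ is an eigenvalue of multiplicity exactly $d$ throughout $U$ (by the hypothesis $K\in[\mathcal C^1_d]$), no other eigenvalue of $L+K_m$ can lie inside $\gamma$. For a selfadjoint operator the Riesz projection for a set of real eigenvalues coincides with the orthogonal projection onto the corresponding eigenspace, so $\widetilde P^0_m=P^0_m$.

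The projections $P^\pm_m$ cannot be recovered by the same device, as the relevant parts of $\sigma(L+K_m)$ are unbounded; instead I would use a sign-function construction. Set
$$C_m:=(L+K_m)^2+P^0_m\,,$$
which is a $C^1$ family of positive selfadjoint operators with spectrum uniformly bounded below on $U$ (after possibly shrinking $U$). The inverse square root $C_m^{-1/2}$ may then be defined by a contour integral over a fixed compact curve encircling $\sigma(C_m)$, and it inherits $C^1$ regularity from $C_m$. A short computation on the spectral decomposition of $L+K_m$ shows that
$$S_m:=(L+K_m)\,C_m^{-1/2}$$
vanishes on $X^0_{K_m}$ and coincides with the sign of $L+K_m$ on $(X^0_{K_m})^\perp$; hence $S_m=P^+_m-P^-_m$, and we obtain
$$P^\pm_m=\tfrac{1}{2}\bigl(I-P^0_m\pm S_m\bigr),$$
both of class $C^1$ in $m$.

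The main technical point I expect to require care is securing a uniform spectral gap of $L+K_m$ around $0$ for $m$ in a full neighborhood of $m_0$ — this is precisely what legitimizes both the Riesz contour integral defining $\widetilde P^0_m$ and the existence of $C_m^{-1/2}$ via functional calculus — together with making rigorous that differentiation in $m$ may be taken inside the contour integrals. Both items are essentially standard consequences of Proposition \ref{lem501}, the differentiability of $m\mapsto K_m$, and the elementary resolvent identity $A^{-1}-B^{-1}=A^{-1}(B-A)B^{-1}$; nevertheless, organizing them cleanly so that everything can be differentiated in the operator norm will be the most delicate part of the proof.
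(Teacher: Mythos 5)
Your treatment of $P^0_m$ via the Riesz contour integral together with smooth dependence of the resolvent on parameters is exactly the paper's argument; the paper merely globalizes it, using the uniform spectral gap \eqref{eux1} (a consequence of Proposition \ref{lem501} and compactness of $M$) to fix a single contour $\gamma^0$ valid for every $m\in M$, whereas you work locally around a fixed $m_0$, which is equally sufficient for establishing local triviality. Your reason for abandoning the same device for $P^\pm_m$, however, rests on a misreading of the hypotheses: $L$ is assumed \emph{continuous} and $K_m$ is compact, so $L+K_m$ is a bounded operator and $\sigma(L+K_m)$ is a compact subset of $\mathbb R$, uniformly in $m$ by compactness of $M$. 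The paper exploits this directly: after recording that $\sigma(L+K_m)$ is contained in the union of $\{0\}$ with two compact intervals bounded away from $0$ (uniformly in $m$), it encloses the negative and positive portions by fixed Jordan curves $\gamma^\mp$ and writes $\Pi_{X^\pm_{K_m}}=\frac{1}{2\pi i}\int_{\gamma^\pm}\bigl(z\id_X-L-K_m\bigr)^{-1}\,dz$, obtaining $C^1$ dependence on $m$ by exactly the same smooth-parameter argument you already used for $P^0_m$. Your alternative through the sign function $S_m=(L+K_m)\bigl((L+K_m)^2+P^0_m\bigr)^{-1/2}$ is a mathematically correct second route to the same projections (and would indeed be the natural choice if $L$ were an unbounded selfadjoint operator), but here it brings in additional machinery --- a $C^1$ holomorphic calculus for the inverse square root, together with a uniform lower bound for $\sigma\bigl((L+K_m)^2+P^0_m\bigr)$ --- with no gain over treating all three projections by the one-line Riesz formula, which is what the paper does.
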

The proofs of propositions \ref{prop3}-\ref{lem212} are also postponed to the Appendix (Section \ref{App}), at the end of the paper. 
\section{A fibered saddle around the critical manifold}\label{sec5}
Let the map $K:M\to\mathcal K(X)$ belong to the class [$\mathcal C^1_d$], and assume that
\begin{equation}\label{cK}
X^0_{K_m}\cap(T_mM)^\bot=\{0\}\qquad \text{ for every }m\in M,	
\end{equation}
or, equivalently,	
\begin{equation}\label{eq43}
	X=X_{K_m}^-\oplus (T_mM)\oplus X_{K_m}^+\qquad\text{ for every } m\in M\,.
\end{equation}
Recalling Proposition \ref{lem212}, the vector bundles $\mathfrak X^\pm_K$ are continuously-differentiable, and this smoothness is inherited by their Withney sum $$\mathfrak X_K^-\oplus\mathfrak X_K^+:=\Big\{(m,v^-,v^+):m\in M,\ v^\pm\in X_{K_m}^\pm\Big\}\,.$$

Then, the tubular neighborhood theorem states the existence of some constant $r>0$ (not depending on $m$ or $v^\pm$) such that, whenever  $0<r^\pm\leq r$, the set $$B_K(r^-,r^+):=\{m+v^-+v^+:m\in M,\ v^\pm\in X_{K_m}^\pm,\ \|v^\pm\|<r^\pm\}$$
is open in $X$, and the map $(m,v_-,v_+)\mapsto m+v_-+v_+$ defines a continuously-differentiable diffeomorphism from the set of triples $(m,v^-,v^+)\in\mathfrak X_K^-\oplus \mathfrak X_K^+$ with $\|v^\pm\|\leq r^\pm$, to the closure of $B_K(r^-,r^+)$. {\em Under these conditions we shall say that $B:=B_K(r^-,r^+)$ is a $K$-tubular neighborhood of $M$}.  

\medbreak

In this situation, the topological boundary $\partial B$ can be divided into two (nondisjoint) closed sets
$$\partial B=(\partial^-_KB)\cup(\partial^+_KB)\,,$$
where $\partial^\pm_K B:=\{m+v^-+v^+:m\in M,\ v^\pm\in X_{K_m}^\pm,\ \|v^\pm\|=r^\pm\}$. For future reference we also consider the set
$$B^0:=\{m+v^+:m\in M,\ v^+\in X_{K_m}^+,\ \|v^+\|\leq r^+\}\,.$$

\medbreak

Assume now that $\bar B\subset\Omega$ and $\Phi:\Omega\to\mathbb R$ is continuously-differentiable. For each level $c\in\mathbb R$ one can consider the corresponding $\bar B$-sublevel set
$\Phi_c:=\{x\in\bar B:\Phi(x)\leq c\}$. Given $x\in\partial B$ and $\sigma>0$ we  consider the region 
\begin{equation}\label{csig}
	C_\sigma(x):=\{\lambda v:v\in X,\ \|v-\nabla\Phi(x)\|<\sigma,\ 0<\lambda<1\}\,.
\end{equation}
If $\|\Phi(x)\|>\sigma$ this set has the shape of an ice-cream, and it is contained in the convex cone with vertex at the origin generated by the open ball centered at $\nabla\Phi(x)$ and with radius $\sigma$.



	
	

\begin{definition}[Fibered saddle neighborhood of $M$]\label{fstn} The neighborhood $B$ of $M$ with $\dist(B,X\backslash\Omega)>0$ will be said to be of fibered saddle type (relative to $\Phi\in C^1(\Omega)$) provided that  and there exists a map $K:M\to\mathcal K(X)$ in the class $[\mathcal C_d^1]$ and satisfying \eqref{cK} such that $B=B_K(r^-,r^+)$ is a $K$-tubular neighborhood of $M$, and moreover:
	\begin{enumerate}
	\item[(i)] there exists some constant $\sigma>0$ (not depending on $x$), and for every $x\in(\partial^+_K B)\backslash(\partial^-_K B)$ there exists some $\epsilon_x>0$, such that $$x-\epsilon_x\, C_\sigma(x)\subset B,\qquad \big(x+\epsilon_x\, C_\sigma(x)\big)\cap B=\emptyset\,,$$
		\item[(ii)]  $\displaystyle{\sup_{\partial^- B}\Phi<\inf_{B^0}\Phi}$,\ and there exists some  $\displaystyle{c_0\in\left]\sup_{\partial^- B}\Phi,\inf_{B^0}\Phi\right[}$ such that 
		$$\nabla\Phi(x)\not=0\text{ for all }x\in\Phi_{c_0}\,.$$
	\end{enumerate} 
\end{definition}
Condition {\em (i)} can be thought of as a {\em positive outer normal derivative condition} on $(\partial^+B)\backslash(\partial^-B)$. In particular, it forbids  $\Phi$ having critical points on $(\partial^+ B)\backslash(\partial^- B)$. Since {\em (ii)} rules out the presence of critical points on $\partial^- B$, one concludes that $\Phi$ does not have critical points on $\partial B$. 
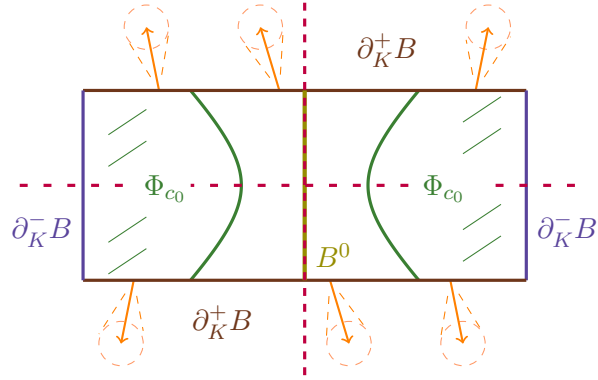
\begin{figure}
\begin{center}
\begin{tikzpicture}
	\begin{axis}[axis equal image, axis lines = none,
		ymin = -3, ymax = 3,
	xmin = -5, xmax = 5,
	width=10cm,
height=7cm,
samples = 100]

\draw[atomictangerine, dashed] (axis cs:2.9,2.5) circle[radius=35];
\draw[atomictangerine, dashed] (axis cs:-2.5,2.5) circle[radius=35];
\draw[atomictangerine, dashed] (axis cs:-0.7,2.5) circle[radius=35];
\draw[atomictangerine, dashed] (axis cs:2.5,-2.5) circle[radius=35];
\draw[atomictangerine, dashed] (axis cs:-2.9,-2.5) circle[radius=35];
\draw[atomictangerine, dashed] (axis cs:0.7,-2.5) circle[radius=35];
\draw[dashed, orange]
(axis cs:-2.3,1.5) -- (axis cs:-2.8,2.3);
\draw[dashed, orange]
(axis cs:-2.3,1.5) -- (axis cs:-2.14,2.45);
\draw[dashed, orange]
(axis cs:-0.4,1.5) -- (axis cs:-1,2.3);
\draw[dashed, orange]
(axis cs:-0.4,1.5) -- (axis cs:-0.34,2.45);
\draw[dashed, orange]
(axis cs:2.7,1.5) -- (axis cs:2.55,2.3);
\draw[dashed, orange]
(axis cs:2.7,1.5) -- (axis cs:3.25,2.35);
\draw[dashed, orange]
(axis cs:2.3,-1.5) -- (axis cs:2.8,-2.3);
\draw[dashed, orange]
(axis cs:2.3,-1.5) -- (axis cs:2.14,-2.45);
\draw[dashed, orange]
(axis cs:0.4,-1.5) -- (axis cs:1,-2.3);
\draw[dashed, orange]
(axis cs:0.4,-1.5) -- (axis cs:0.34,-2.45);
\draw[dashed, orange]
(axis cs:-2.7,-1.5) -- (axis cs:-2.55,-2.3);
\draw[dashed, orange]
(axis cs:-2.7,-1.5) -- (axis cs:-3.25,-2.35);

\fill[white] (axis cs:-2.3,1.5) circle[radius=15];
\fill[white] (axis cs: 2.7,1.5) circle[radius=15]; 
\fill[white] (axis cs:-0.4,1.5) circle[radius=15]; 
\fill[white] (axis cs:2.3,-1.5) circle[radius=15];
\fill[white] (axis cs:-2.7,-1.5) circle[radius=15]; 
\fill[white] (axis cs:0.4,-1.5) circle[radius=15];

\draw[->, thick, orange]
(axis cs:-2.3,1.5)--(axis cs:-2.5,2.5);
\draw[->, thick, orange]
(axis cs:2.7,1.5) -- (axis cs:2.9,2.5);
\draw[->, thick, orange]
(axis cs:-0.4,1.5) -- (axis cs:-0.7,2.5);
\draw[->, thick, orange]
(axis cs:2.3,-1.5) -- (axis cs:2.5,-2.5);
\draw[->, thick, orange]
(axis cs:-2.7,-1.5) -- (axis cs:-2.9,-2.5);
\draw[->, thick, orange]
(axis cs:0.4,-1.5) -- (axis cs:0.7,-2.5); 


	\addplot[
	OliveGreen,
	very thick, domain=-1.5:1.5]
	({sqrt(1 + x^2)},{x});
		\addplot[
	OliveGreen,
	very thick, domain=-1.5:1.5]
	({-sqrt(1 + x^2)},{x});


\draw[OliveGreen](axis cs:-3.1,0.8) -- (axis cs:-2.5,1.2); 
\draw[OliveGreen](axis cs:-3.1,0.3) -- (axis cs:-2.5,0.7); 
\draw[OliveGreen](axis cs:-3.1,-0.9) -- (axis cs:-2.5,-0.5); 
\draw[OliveGreen](axis cs:-3.1,-1.4) -- (axis cs:-2.5,-1); 
\draw[OliveGreen](axis cs:3.1,-0.8) -- (axis cs:2.5,-1.2); 
\draw[OliveGreen](axis cs:3.1,-0.3) -- (axis cs:2.5,-0.7); 
\draw[OliveGreen](axis cs:3.1,0.9) -- (axis cs:2.5,0.5); 
\draw[OliveGreen](axis cs:3.1,1.4) -- (axis cs:2.5,1);


\draw[auburn, very thick] (axis cs:-3.5,1.5) -- (axis cs:3.5,1.5); 
\draw[Violet, very thick](axis cs:3.5,1.5) -- (axis cs:3.5,-1.5); 
\draw[auburn, very thick] (axis cs:3.5,-1.5) -- (axis cs:-3.5,-1.5); 
\draw[Violet, very thick](axis cs:-3.5,-1.5) -- (axis cs:-3.5,1.5);


\draw[olive, ultra thick](axis cs:0,-1.5) -- (axis cs:0,1.5);


\draw[purple, very thick, loosely dashed] (axis cs:-4.5,0) -- (axis cs:4.5,0);
 \draw[purple, very thick, dashed] (axis cs:0,-3) -- (axis cs:0,3);
  \node at (axis cs:0, -1.1) [anchor=west] {\color{olive}\small $B^0$};
  \fill[white] (axis cs:-2.3,0) circle[radius=50];
 \node at (axis cs:-2.2, -0.05) [anchor=center] {\color{OliveGreen}\small $\Phi_{c_0}$};
   \fill[white] (axis cs:2.3,0) circle[radius=50];
   \node at (axis cs:2.2, -0.05) [anchor=center] {\color{OliveGreen}\small $\Phi_{c_0}$};

  
    \node at (axis cs:-3.5, -0.7) [anchor=east] {\color{Violet}\small $\partial^-_KB$};
      \node at (axis cs:3.5, -0.7) [anchor=west] {\color{Violet}\small $\partial^-_KB$};
    \node at (axis cs:1.3, 1.7) [anchor=south] {\color{auburn}\small $\partial^+_KB$};
   \node at (axis cs:-1.3, -2.6) [anchor=south] {\color{auburn}\small $\partial^+_KB$};

   \end{axis}
\end{tikzpicture}
\end{center}
	\caption{The section of $B$ at some value $m\in M$.}
\end{figure}
We point out that the fact that this definition does not presuppose that $M$ is made of critical points of $\Phi$. On the other hand, under the framework of Theorem \ref{th1} we shall construct a tubular neighborhood of $M$ reflecting the geometry of $\Phi_*$ near it. This is the purpose of the following:
\begin{lemma}\label{prop2}{Under the assumptions of Theorem \ref{th1} there exists a neighborhood $B$ of $M$  with $\bar B\subset\Omega$ which is of fibered saddle type relative to $\Phi_*$.}
\begin{proof}
To prove this result we start by applying Proposition \ref{prop3} with the map $K:=\hess\Psi_*$, and we obtain a sequence $K^{(n)}$ in the class $[\mathcal C_d^1]$ uniformly converging to $\hess\Psi_*$ on $M$. In view of Proposition \ref{lem212} the associated vector bundles $\mathfrak X_{K^{(n)}}^\pm$ are continuously differentiable, and it follows from Proposition \ref{lem501} that the associated fibers $X_{K_m^{(n)}}^\pm$ converge to $X_m^\pm$ as $n\to+\infty$, uniformly with respect to $m\in M$. We consider the linear selfadjoint operators $$Q_{n,m}:=\Pi_{X_{\hspace{-0.07cm}K_m^{(n)}}^-}\circ(\hess\Phi_*)\circ\Pi_{X_m^-}\circ\Pi_{X_{\hspace{-0.07cm}K_m^{(n)}}^-}+\Pi_{X_{\hspace{-0.07cm}K_m^{(n)}}^+}\circ(\hess\Phi_*)\circ\Pi_{X_m^+}\circ\Pi_{X_{\hspace{-0.07cm}K_m^{(n)}}^+}\,,$$ 
and we conclude that
\begin{equation}\label{eq:conv87}
\big\|\hess\Phi_*(m)-Q_{n,m}\big\|_{\mathscr L(X)}\to 0\quad\text{as $n\to\infty$, uniformly with respect to $m\in M$.}
\end{equation}

\medbreak

On the other hand $\Phi_*$ is constant on the critical manifold $M$ and so there is no loss of generality in assuming that $\Phi_*(m)=0$ for every $m\in M$. Then, for every $m\in M$ and $x\in X$ with  $\|x\|$ small, Taylor's formula provides the asymptotic expansions
$$\Phi_*(m+x)=\frac{1}{2}\langle\hess\Phi_*(m)x,x\rangle+o(\|x\|^2),\ \ \nabla\Phi_*(m+x)=\hess\Phi_*(m)x+o(\|x\|)\,.$$ 
By compactness these estimates are further uniform with respect to $m\in M$. In combination with \eqref{eq:conv87} we deduce the existence of sequences of positive numbers $r_n,\epsilon_n\searrow 0$ such that, for every $m\in M$, all $n\in\mathbb N$, and every $\tilde x=x^-+x^+\in X_{K_m^{(n)}}^-\oplus X_{K_m^{(n)}}^+$ with $\|\tilde x\|=\sqrt{\|x^-\|^2+\|x^+\|^2}<r_n$ one has
\begin{equation*}
\Big|\Phi_*(m+\tilde x)-\frac{1}{2}\langle Q_{n,m}\,\tilde x,\tilde x\rangle\Big|\leq\epsilon_n\|\tilde x\|^2,\ \ \|\nabla\Phi_*(m+\tilde x)-Q_{n,m}\tilde x\|\leq\epsilon_n\|\tilde x\|\,,	
\end{equation*}
or, what is the same (setting $v^\pm:=\Pi_{X_m^\pm}x^\pm$),
\begin{align*}
&\left|\Phi_*(m+\tilde x)-\frac{1}{2}\left\langle\hess\Phi_*(m)(v^-+v^+),v^-+v^+\right\rangle\right|\leq\epsilon_n\|\tilde x\|^2\,,\\
&\left\|\nabla\Phi_*(m+\tilde x)-\Pi_{X_{\hspace{-0.07cm}K_m^{(n)}}^-}\hess\Phi_*(m)v^--\Pi_{X_{\hspace{-0.07cm}K_m^{(n)}}^+}\hess\Phi_*(m)v^+\right\|\leq\epsilon_n\|\tilde x\|\,.
\end{align*}
At this point we recall that each operator $\hess\Phi_*(m)$ is negative definite on $X_m^+$ and positive definite on $X_m^+$. Moreover, a compactness argument shows that the associated constants can be chosen uniformly with respect to $m\in M$. Then one can take $K:=K^{(n)}$ for $n$ big enough and adjust the small constants $r^\pm>0$ satisfying the statement.
\end{proof}
\end{lemma}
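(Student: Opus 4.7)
The plan is to produce the triple $(K, r^-, r^+)$ that satisfies every item in the definition of a fibered saddle neighborhood of $M$ relative to $\Phi_*$. The obvious reference map is $K_* := \hess\Psi_*$; indeed [ND] gives $\ker\hess\Phi_*(m) = T_mM$, so $K_* : M \to \mathcal K_d(X)$ is well-defined and continuous. The obstruction is that $\Psi_* \in C^2(\Omega)$ only provides a continuous (not $C^1$) dependence $m \mapsto K_*(m)$, so $K_*$ need not lie in the class $[\mathcal C^1_d]$ and one cannot directly invoke Proposition \ref{lem212} to get $C^1$ subbundles with which to form a tubular neighborhood.

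To remedy this, I would apply Proposition \ref{prop3} to obtain a sequence $K^{(n)}\in[\mathcal C^1_d]$ with $K^{(n)}\to K_*$ uniformly on $M$. Proposition \ref{lem501} then yields uniform convergence of the fibers $X^\pm_{K^{(n)}_m}\to X^\pm_m$ in the Grassmannian, and in particular, for $n$ sufficiently large, \eqref{cK} holds since the limit is transverse to $T_mM$ by [ND]. With this, Proposition \ref{lem212} gives $C^1$ subbundles $\mathfrak X^\pm_{K^{(n)}}$, and the tubular neighborhood theorem applied to their Whitney sum produces admissible radii, so $B := B_{K^{(n)}}(r^-, r^+)$ is a $K^{(n)}$-tubular neighborhood of $M$ for any small enough $r^\pm$.

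It remains to adjust the parameters so that properties (i)--(ii) hold. Assuming $\Phi_*\equiv 0$ on $M$ (no loss of generality since $\Phi_*$ is constant on the critical manifold), Taylor expansion and the compactness of $M$ furnish uniform estimates
$$\Phi_*(m+v) = \tfrac{1}{2}\langle \hess\Phi_*(m)v, v\rangle + o(\|v\|^2),\qquad \nabla\Phi_*(m+v) = \hess\Phi_*(m)v + o(\|v\|),$$
with the remainders uniform in $m\in M$. Since $\hess\Phi_*(m)$ is negative-definite on $X^-_m$ and positive-definite on $X^+_m$ with constants uniform in $m$ (again by compactness), the Hessian-based quadratic form yields, for appropriately chosen $r^+\ll r^-$, the inequality $\Phi_*<0$ on $\partial^-B$ and $\Phi_*\geq 0$ on $B^0$; this, plus a standard gradient lower bound on a neighborhood $\Phi_{c_0}$ to pick $c_0$, verifies (ii). For (i), on the portion $(\partial^+B)\setminus(\partial^-B)$ one has $\|v^+\|=r^+$, so $\nabla\Phi_*$ has a dominant component in $X^+_m$ of size comparable to $r^+$, which by uniform continuity stays inside a fixed open cone around the outward direction; the ice-cream region $C_\sigma(x)$ of \eqref{csig} then lies entirely on the appropriate side of $\partial B$ for a suitable $\sigma>0$ and sufficiently small $\epsilon_x$.

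The main technical annoyance — and the place I expect to have to work most carefully — is the mismatch between the subspaces $X^\pm_{K^{(n)}_m}$ used to \emph{parametrize} the tubular neighborhood and the spectral subspaces $X^\pm_m$ of $\hess\Phi_*(m)$ that control the \emph{quadratic} and \emph{gradient} expansions above. The two families only approximately coincide, and both the sign of $\Phi_*$ on $\partial^\pm B$ and the outward-pointing property of $\nabla\Phi_*$ must be obtained by projecting back and forth between them; this forces $n$ to be taken large before the radii $r^\pm$ are fixed, and requires inequality \eqref{oplus} (or a direct Neumann-series argument) to quantify the transfer. Once this bookkeeping is done uniformly in $m\in M$, one can pick $n$, then $r^-$, then $r^+\ll r^-$, then $c_0$ and $\sigma$, in that order.
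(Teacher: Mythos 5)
Your proposal follows essentially the same route as the paper: regularize $\hess\Psi_*$ via Proposition \ref{prop3} to obtain $K^{(n)}\in[\mathcal C^1_d]$, use Propositions \ref{lem501} and \ref{lem212} to get uniform fiber convergence and $C^1$ subbundles, invoke the tubular neighborhood theorem, and then verify conditions (i)--(ii) by combining the uniform Taylor expansion with uniform definiteness constants on $X^\pm_m$. The ``technical annoyance'' you flag --- reconciling the parametrizing subspaces $X^\pm_{K^{(n)}_m}$ with the spectral subspaces $X^\pm_m$ by projecting back and forth --- is exactly what the paper makes explicit through the intermediate operators $Q_{n,m}$ and the uniform estimate $\|\hess\Phi_*(m)-Q_{n,m}\|\to 0$, so there is no substantive divergence between the two arguments.
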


\medbreak

One easily checks that fibered saddle neighborhoods are robust with respect to $C^1$-perturbations of the functional. With other words, if the neighborhood $B$ of $M$ is of fibered saddle type relative to $\Phi_*$, then it will also be of fibered saddle type relative to $\Phi$ provided that the $C^1$-norm of $\Phi-\Phi_*$ is small enough.

\medbreak

It remains to show that under these conditions $\Phi$ has at least $1+\cl(M)$ critical points in $B$. With this goal, a first attempt could consist in constructing `finite dimensional reductions'  of $\Phi$, prove the existence of critical points for these approximations, and try to pass to the limit. However, in this procedure the multiplicity of critical points would likely be lost. Thus, we shall follow a modified strategy consisting in introducing an infinite-dimensional notion of category going back to Szulkin \cite{szu}, observing that the number of critical points can be estimated using this category, and finally give lower bounds on the category based on the cuplength of $M$. The use of finite-dimensional approximations will come at this last step.     

\medbreak

Thus, from now on we fix the functional $\Phi:\Omega\to\mathbb R$ of the form \eqref{e2} for some $\Psi\in C^1(\Omega)$ with compact gradient. In addition we assume that there exists some compact, $C^1$-manifold $M\subset\Omega$ and some neighborhood $B$ of $M$ with $\bar B\subset\Omega$ which is of fibered saddle type relative to $\Phi$.

\section{Szulkin's relative category and critical points}\label{sec6.1}

The Lusternik-Schnirelmann category is a well-known topological tool which has proved useful to study existence and multiplicity of critical points of functionals which are either coercive or defined on a compact manifold. Subsequently it has been generalized in many directions. In order to study functionals which may be unbounded both from above and from below, Fournier and Willem \cite{fouwil} employed the concept of {\em relative category with respect to a subset} (see also Weinstein's memoir \cite[p. 275]{wei2}). Subsequently, the situation where the functional is defined on an infinite-dimensional space was treated by Szulkin. In \cite{szu} he considered the generalized notion of {\em relative category with respect to an admissible class of deformations}, which is very close to the one used below.
\medbreak

As before, we assume that $B\subset\Omega$ is a fibered saddle neighborhood of $M$ relative to the functional $\Phi:\Omega\to\mathbb R$. Pick some constant $c_0\in\mathbb R$ as in Definition \ref{fstn}{\em (ii)}, and let it be fixed in what follows.

\medbreak

Given a closed subset $A\subset\bar B$, by a {\em deformation} of $A$ in $\bar B$ we mean a continuous map $\eta:[0,1]\times A\to\bar B$ such that $\eta(0,a)=a$ for every $a\in A$.  If $A=\bar B$ we shall just say that $\eta$ is a {\em deformation of $\bar B$}. We shall say that $A$ is {\em contractible} in $\bar B$ provided that there exists a deformation $\eta$ of $A$ in $\bar B$ such that $\eta(\{1\}\times A)\subset\bar B$ is a singleton.  We consider the class $\mathcal D^*$ of deformations $\eta$ of $\bar B$ the form
\begin{equation}\label{eq9}
	\eta(t,x)=\exp\big[\theta(t,x)L\big]x+C(t,x)\,,\qquad (t,x)\in[0,1]\times\bar B\,,
\end{equation}
where $\theta:[0,1]\times\bar B\to\mathbb R$ and $C:[0,1]\times\bar B\to X$ are continuous functions satisfying: 
\begin{equation}\label{eq07}
	\begin{cases}
		\theta(0,x)=0,\ C(0,x)=0,\qquad \forall x\in\bar B\,,\\ \\
		\theta([0,1]\times\bar B)\subset\mathbb R\text{ bounded; }\ C([0,1]\times\bar B)\subset X\text{ relatively compact.}
	\end{cases}
\end{equation}

\begin{remark}{\rm Deformations of the form \eqref{eq9} were already considered in \cite[Proposition A.18, p. 86]{rab}. Recalling \eqref{sptY}-\eqref{eu12} one can rewrite \eqref{eq9} more explicitly as follows:
		$$\eta(t,x)=e^{-\theta(t,x)}x^-+x^0+e^{\theta(t,x)}x^++C(t,x),\qquad (t,x)\in[0,1]\times\bar B\,,$$
		where $x=x^-+x^0+x^+\in X^-\oplus X^0\oplus X^+=X$. The class $\mathcal D^*$ is {\em Szulkin-admissible}, meaning that for any $\eta_1,\eta_2\in\mathcal D^*$, their juxtaposition		
 $$\eta:=\eta_1\star\eta_2:[0,1]\times\bar B\to\bar B,\quad (t,x)\mapsto\begin{cases}
			\eta_1(2t,x)&\text{if }0\leq t\leq1/2,\\
			\eta_2(2t-1,\eta_1(1,x))&\text{if }1/2\leq t\leq 1,	
		\end{cases}$$}
		belongs again to $\mathcal D^*$.  
		\end{remark}
		
	A first motivation to have considered this class is made clear by Lemma \ref{deflem} below.	
	\begin{lemma}[Deformation lemma]\label{deflem}{\em For any given $c>c_0$ and any open neighborhood $U\subset B$ of the set $K_c$ of critical points of $\Phi$ at level $c$, there exists some $0<\delta<c-c_0$ and a deformation $\eta\in\mathcal D^*$ such that
		\begin{equation}\label{etdef}
\eta(t,x)=x\text{ if }x\in\Phi_{c_0},\qquad	\eta(\{1\}\times\Phi_{c_0+\delta})=\Phi_{c_0},\qquad\eta(\{1\}\times(\Phi_{c+\delta}\backslash U))\subset\Phi_{c-\delta}\,.
		\end{equation}}
	\begin{proof}
We start by observing that $\Phi$ satisfies the Palais-Smale condition on $B$, i.e., every sequence $\{x_n\}_n\subset B$ with $\nabla\Phi(x_n)\to 0$ has subsequence converging in $B$. This is a consequence of $\nabla\Psi$ being compact, $L$ being Fredholm and $B$ being bounded.  In particular, $K_c$ is compact and one can find some open set $U_0\subset B$ with $K_c\subset U_0$ and $\dist(U_0,X\backslash U)>0$. For each $\delta>0$ we consider the set
$$R_\delta:=\Phi_{c_0+\delta}\cup\{x\in\bar B\backslash U_0:|\Phi(x)-c|\leq\delta\}\,.$$
The Palais-Smale condition implies that there are no critical points on $R_\delta$, and indeed 
$$\inf_{x\in R_\delta}\|\nabla\Phi(x)\|>0\,,$$
 provided that $\delta>0$ is small enough. Adapting a classical argument (see \cite[Lemma 3.2, p. 547]{ben}) one can find a locally Lipschitz-continuous function (sometimes called {\em pseudogradient}) $Z:\Omega_0\to X$ such that $Z(\Omega_0)$ is relatively compact in $X$ and $\sup_{x\in\Omega_0}\|Z(x)-\nabla\Psi(x)\|$ is arbitrarily small. Here $\Omega_0\subset X$ is some open set with $B\subset\Omega_0\subset\Omega$ and $\dist(\Omega_0,X\backslash\Omega)>0$. 
 After fixing $\delta$ we can select $Z$ satisfying
 \begin{equation}\label{pseudo}
\|Z(x)-\nabla\Psi(x)\|<\sigma\ \forall x\in\partial_K^+ B\,,\qquad \inf_{x\in R_\delta}\big\langle\nabla\Phi(x),Lx+Z(x)\big\rangle>0\,,
 	\end{equation}
 where $\sigma>0$ is the constant appearing in  Definition \ref{fstn}{\em (i)}.

\medbreak

Let $x(t,x_0)$ be the maximal solution of the initial value problem
$$x'=-(Lx+Z(x))\,,\qquad x(0)=x_0\,.$$
  When $x_0\in(\partial^+ B)\backslash(\partial^-B)$ this solution enters into $B$. This is a consequence of the first inequality of \eqref{pseudo}. Moreover, for any $x_0\in\bar B$ the solution will not exit $\bar B$ unless it previously enters into $\Phi_{c_0}$. Given $x_0\in B$ it may happen that $x(t,x_0)$ remains in $\bar B\backslash\Phi_{c_0}$ for each $t\in[0,1]$. Then we define $\tau(x_0):=1$. Otherwise $\tau(x_0)$ will be the first instant $t\in[0,1]$ such that $x(t,x_0)\in\Phi_{c_0}$. The function $\tau:\bar B\to\mathbb R$ is continuous and allows us to define the deformation
 $$\eta:[0,1]\times\bar B\to\bar B\,,\qquad\eta(t,x_0):=x(\tau(x_0)t,x_0)\,.$$
 
  The first assertion of \eqref{etdef} is immediate. On the other hand, the second inequality of \eqref{pseudo} can be used to show that, after possibly replacing $\delta$ by some smaller number the remaining statements of \eqref{etdef} also hold
  . The proof will be complete after showing that $\eta$ belongs to the class $\mathcal D^*$. This is a consequence of the variations of constants formula applied to the linear equation $y'=-Ly+b(t)$ with $b(t)=-Z(x(t,x_0))$. 
	\end{proof}
	\end{lemma}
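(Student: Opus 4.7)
The plan is to realise $\eta$ as a rescaled, stopped pseudogradient flow. I would construct a locally Lipschitz vector field of the form $V(x):=Lx+Z(x)$, with $Z:\Omega_0\to X$ (on some open $\Omega_0$ satisfying $\bar B\subset\Omega_0$ and $\dist(\Omega_0,X\setminus\Omega)>0$) having relatively compact image and approximating $\nabla\Psi$ uniformly within any preassigned tolerance. The flow of $x'=-V(x)$ will then decrease $\Phi$, and $\eta$ will be obtained by cutting this flow off at the first instant it enters $\Phi_{c_0}$.

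A compulsory preliminary is the Palais--Smale condition on $\bar B$: for any sequence $\{x_n\}\subset\bar B$ with $\nabla\Phi(x_n)\to 0$ one has $Lx_n=-\nabla\Psi(x_n)+o(1)$, so compactness of $\nabla\Psi$ extracts a sub-convergent $Lx_n$; Fredholmness makes $L:(\ker L)^\bot\to L(X)$ a topological isomorphism, yielding convergence of the $(\ker L)^\bot$-component of a subsequence of $x_n$, while the bounded component in $\ker L$ (a finite-dimensional subspace) trivially sub-converges. Consequently $K_c$ is compact, and I may fix open $U_0$ with $K_c\subset U_0\subset\bar U_0\subset U$ and set $r:=\dist(\bar U_0,\bar B\setminus U)>0$. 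A standard contradiction argument then produces some $\kappa>0$ with $\langle\nabla\Phi(x),V(x)\rangle\geq\kappa$ on $R_\delta$ for every sufficiently small $\delta$; otherwise a sequence $x_n\in R_{\delta_n}$ with $\nabla\Phi(x_n)\to 0$ would sub-converge to a critical point lying either in $\Phi_{c_0}$ (forbidden by Definition \ref{fstn}(ii)) or in $K_c\cap(\bar B\setminus U_0)=\emptyset$. I would build $Z$ via the standard partition-of-unity construction (compare \cite{szu}), arranged so that $V$ is a genuine pseudogradient with $\langle\nabla\Phi,V\rangle\geq 0$ throughout $\Omega_0$, and so that $\|Z-\nabla\Psi\|_\infty<\sigma$, where $\sigma$ is the constant from Definition \ref{fstn}(i); the latter activates (i) and makes $-V$ point strictly into $B$ along $\partial_K^+B\setminus\partial_K^-B$.

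Since $\Phi<c_0$ on $\partial_K^-B$ by (ii), the solution $x(\cdot,x_0)$ of $x'=-V(x)$ starting at $x_0\in\bar B$ remains in $\bar B$ until it first enters $\Phi_{c_0}$. I would then set $\tau(x_0):=\inf\{t\in[0,1]:x(t,x_0)\in\Phi_{c_0}\}$ with the convention $\inf\emptyset:=1$, and $\eta(t,x_0):=x(\tau(x_0)t,x_0)$; the transverse crossing $\tfrac{d}{dt}\Phi(x(\tau))\leq-\kappa<0$ at level $c_0\in R_\delta$ gives continuity of $\tau$, hence of $\eta$. The first assertion of \eqref{etdef} is immediate since $\tau\equiv 0$ on $\Phi_{c_0}$. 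For $x_0\in\Phi_{c_0+\delta}\subset R_\delta$ the linear bound $\Phi(x(t))\leq c_0+\delta-\kappa t$ yields $\tau(x_0)\leq\delta/\kappa\leq 1$ provided $\delta\leq\kappa$, giving the second. For $x_0\in\Phi_{c+\delta}\setminus U$, let $M:=\sup_{\bar B}\|V\|<\infty$; during the time window $[0,2\delta/\kappa]$ the trajectory lies at distance at most $2M\delta/\kappa$ from $x_0$, hence outside $\bar U_0$ whenever $2M\delta/\kappa<r$, so as long as $\Phi(x(t))\in[c-\delta,c+\delta]$ it belongs to the second part of $R_\delta$; the decrease rate $\kappa$ then forces $\Phi$ below $c-\delta$ by time $2\delta/\kappa\leq 1$, after which the global pseudogradient property $\langle\nabla\Phi,V\rangle\geq 0$ prevents $\Phi$ from climbing back up.

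Finally, applying variation of constants to $y'=-Ly+b(t)$ with $b(t)=-Z(x(t,x_0))$ gives
$$\eta(t,x_0)=e^{-\tau(x_0)tL}x_0-\int_0^{\tau(x_0)t}e^{-(\tau(x_0)t-s)L}Z(x(s,x_0))\,ds,$$
which matches \eqref{eq9} with $\theta(t,x_0):=-\tau(x_0)t$ continuous and bounded in $[-1,0]$ and $C(t,x_0)$ lying in the closed convex hull of the compact set $\{e^{-\rho L}v:\rho\in[0,1],\,v\in\overline{Z(\Omega_0)}\}$ (itself compact by Mazur's theorem); hence $\eta\in\mathcal D^*$. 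The main obstacle is the third assertion of \eqref{etdef}: one must prevent the trajectory launched from $\Phi_{c+\delta}\setminus U$ from being deflected into $\bar U_0$, where pseudogradient control degenerates, before it has time to descend into $\Phi_{c-\delta}$. This is resolved by the quantitative choice $\delta\leq\min\{(c-c_0)/2,\,\kappa,\,r\kappa/(2M)\}$, which synchronises the decrease rate $\kappa$ on $R_\delta$, the speed bound $M$, and the geometric buffer $r$ against each other.
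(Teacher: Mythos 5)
Your proposal follows essentially the same route as the paper: Palais--Smale on $\bar B$ from compactness of $\nabla\Psi$ and Fredholmness of $L$, a lower bound for the pseudogradient pairing on the set $R_\delta$ away from $U_0$, a locally Lipschitz compact-image approximation $Z$ of $\nabla\Psi$ with $\|Z-\nabla\Psi\|<\sigma$ so that $-(Lx+Z(x))$ points inward along $\partial_K^+B\setminus\partial_K^-B$, the stopping-time reparametrisation $\eta(t,x_0)=x(\tau(x_0)t,x_0)$, and the variation-of-constants identification of $\eta$ with a $\mathcal D^*$-deformation. The paper disposes of the quantitative tuning with the phrase ``after possibly replacing $\delta$ by some smaller number,'' whereas you make the choice $\delta\leq\min\{(c-c_0)/2,\ \kappa,\ r\kappa/(2M)\}$ explicit, but the underlying argument is the same.
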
		
		\medbreak

		\begin{definition}[{$*$-category}]\label{catx}{The closed set $A\subset\bar B$ with $A\supset\Phi_{c_0}$ will be said to be of $*$-category $k\geq 0$, denoted $\cat^*(A)=k$, provided that $k$ is the smallest integer for which there are closed sets
$A_0,A_1,\ldots,A_k\subset A$ with $A_0\supset\Phi_{c_0}$, such that:
\begin{enumerate}
	\item[(a)] $A=A_0\cup A_1\cup\ldots A_k$,
	\item[(b)]  $A_i$ is contractible in $\bar B$ for every $i\in\{1,\ldots,k\}$,
	\item[(c)] there exists some deformation $\eta\in\mathcal D^*$ with
	\begin{equation*}
\eta(t,x)=x\text{ if }x\in\Phi_{c_0},\qquad \eta(\{1\}\times A_0)\subset\Phi_{c_0}.
	\end{equation*}
	(If such an integer $k$ does not exist, then $\cat^*(A):=+\infty$.) 
\end{enumerate}}
\end{definition}
Observe that while the deformation $\eta$ in {\em (c)} is required to belong to the class $\mathcal D^*$, the deformations -whose existence is implicit in {\em (b)}- taking the $A_i$'s for $1\leq i\leq k$ into singletons are just assumed  continuous. In this way, $A_1,\ A_2,\ldots A_k$ could be, for instance, closed balls. This fact will be important in the proof of Lemma \ref{proplsx} below.

\medbreak

The notion of $*$-category has many interesting properties, see \cite[Proposition 2.8]{szu}. For future reference we recall two of them below. In the first statement $\cat_{\bar B}$ stands for the usual Lusternik-Schnirelmann category.
\begin{lemma}[Szulkin, \cite{szu}]\label{lem1-1x}{The following hold:
	\begin{enumerate}
		\item[(i)] Let the subset $A\subset\bar B$ with $\Phi_{c_0}\subset A$ be closed, and let $U\subset B\backslash\Phi_{c_0}$ be open. If $\cat_{\bar B}(\bar U)<+\infty$ then $\cat^*(A\backslash U)\geq\cat^*(A)-\cat_{\bar B}(\bar U)$. 
		\item[(ii)] Let the subsets $A_1,A_2\subset\bar B$ with $\Phi_{c_0}\subset A_1\cap A_2$ be closed. If there exists a deformation $\eta\in\mathcal D^*$ with $\eta(t,x)=x$ for every $(t,x)\in[0,1]\times\Phi_{c_0}$ and $\eta(\{1\}\times A_1)\subset A_2$, then  $\cat^*(A_1)\leq\cat^*(A_2)$.
	\end{enumerate}}
	\end{lemma}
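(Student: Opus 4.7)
The plan is to verify each statement by producing an explicit subordinate covering from a witness covering of the right-hand side, leveraging the Szulkin-admissibility of $\mathcal D^*$ under juxtaposition (spelled out in the remark preceding the deformation lemma) together with the observation from the excerpt that in condition \emph{(b)} of Definition \ref{catx} the contractions of the pieces $A_1,\ldots,A_k$ are only required to be continuous, not to lie in $\mathcal D^*$.

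For \emph{(ii)}, I would assume $k:=\cat^*(A_2)<+\infty$ (otherwise nothing is to show) and take a witness covering $A_2=A_2^0\cup A_2^1\cup\cdots\cup A_2^k$ together with a deformation $\eta^*\in\mathcal D^*$ fixing $\Phi_{c_0}$ pointwise and satisfying $\eta^*(\{1\}\times A_2^0)\subset\Phi_{c_0}$. Writing $h(x):=\eta(1,x)$, set $A_1^i:=h^{-1}(A_2^i)\cap A_1$ for $i=0,1,\ldots,k$. These are closed in $A_1$, their union is all of $A_1$ because $h(A_1)\subset A_2=\bigcup_i A_2^i$, and $A_1^0\supset\Phi_{c_0}$ because $h$ is the identity on $\Phi_{c_0}$. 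For $i\geq 1$ the set $A_1^i$ is contractible in $\bar B$ by concatenating $t\mapsto\eta(t,x)$ with any continuous contraction of $A_2^i$ starting at $h(x)$. For $i=0$ the juxtaposition $\eta\star\eta^*$ belongs to $\mathcal D^*$, fixes $\Phi_{c_0}$ pointwise (since both factors do), and carries $A_1^0$ into $\Phi_{c_0}$. Hence $\cat^*(A_1)\leq k$.

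For \emph{(i)}, I would assume $j:=\cat^*(A\backslash U)$ and $\ell:=\cat_{\bar B}(\bar U)$ are both finite, take a witness covering $A\backslash U=B_0\cup B_1\cup\cdots\cup B_j$ with $B_0\supset\Phi_{c_0}$ and associated $\eta^*\in\mathcal D^*$, and a classical Lusternik--Schnirelmann covering $\bar U=C_1\cup\cdots\cup C_\ell$ with each $C_i$ closed and contractible in $\bar B$. Since $U\cap\Phi_{c_0}=\emptyset$, the inclusion $\Phi_{c_0}\subset A\backslash U$ makes $B_0$ a legitimate distinguished piece for $A$ as well. The decomposition $A=(A\backslash U)\cup(A\cap\bar U)$ then produces
$$A=B_0\cup B_1\cup\cdots\cup B_j\cup(C_1\cap A)\cup\cdots\cup(C_\ell\cap A),$$
where each $C_i\cap A$ is closed and inherits contractibility in $\bar B$ by restriction of the original contraction of $C_i$. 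Hence $\cat^*(A)\leq j+\ell$, which rearranges to the claim.

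The main technical point I expect is in \emph{(ii)}: checking carefully that $\eta\star\eta^*$ retains the structural form \eqref{eq9}--\eqref{eq07}. One must collapse the product $\exp[\theta_2 L]\exp[\theta_1 L]=\exp[(\theta_1+\theta_2)L]$ (the factors commute) and verify that the resulting compact-perturbation summand $C$ is still continuous with relatively compact image in $X$; both are consequences of the admissibility remark in the excerpt, but this is the only place where the specific shape of $\mathcal D^*$ (as opposed to a generic class of deformations) plays a nontrivial role. Everything else is a mechanical adaptation of the classical Lusternik--Schnirelmann subadditivity estimate to the relative setting with a distinguished piece containing $\Phi_{c_0}$.
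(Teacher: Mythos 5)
The paper does not include a proof of this lemma; it is stated as borrowed from Szulkin \cite[Proposition 2.8]{szu}, so there is no in-paper argument to compare against. Your proof is correct and is the standard argument one expects to find in Szulkin. In part \emph{(ii)} you pull back the witness covering of $A_2$ through the time-one map $h:=\eta(1,\cdot)$: the pieces $A_1^i=h^{-1}(A_2^i)\cap A_1$ are closed (preimage of a closed set under the continuous map $h$, intersected with the closed set $A_1$), their union is $A_1$ because $h(A_1)\subset A_2$, and $\Phi_{c_0}\subset A_1^0$ because $h$ fixes $\Phi_{c_0}$. Contractibility of $A_1^i$ for $i\geq 1$ only needs a continuous contraction, so concatenating $\eta$ with the given contraction of $A_2^i$ is legitimate, exactly as you observe from Definition \ref{catx}\emph{(b)}. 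For $A_1^0$ the Szulkin-admissibility remark gives $\eta\star\eta^*\in\mathcal D^*$, it fixes $\Phi_{c_0}$ pointwise, and $(\eta\star\eta^*)(1,\cdot)=\eta^*(1,h(\cdot))$ carries $A_1^0$ into $\Phi_{c_0}$. Part \emph{(i)} is the usual subadditivity: $A=(A\setminus U)\cup(A\cap\bar U)$, the first factor contributes $B_0,\ldots,B_j$ (and $B_0\supset\Phi_{c_0}$ is a valid distinguished piece since $U\cap\Phi_{c_0}=\emptyset$), while $\bar U$ contributes $\ell$ additional closed pieces $C_i\cap A$, each contractible in $\bar B$ by restriction. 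The same $\eta^*$ serves for $A$ since deformations in $\mathcal D^*$ are defined on all of $\bar B$. This yields $\cat^*(A)\leq\cat^*(A\setminus U)+\cat_{\bar B}(\bar U)$, which is the claimed inequality.
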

A third, even more important feature of the notion of $*$-category is the following:
 \begin{lemma}\label{proplsx}{It $B\subset\Omega$ is a fibered saddle neighborhood of $M$ relative to $\Phi$, then $\Phi$ has at least $\cat^*(\bar B)$ critical points in $B$.}
\begin{proof}We follow classical arguments which can be traced back to Lusternik-Schnirelmann's original memoir \cite{lussch}. For each $1\leq j\leq k:=\cat^*(B)$ we set
	$$\Gamma_j:=\{A\subset B\text{ closed such that } \Phi_{c_0}\subset A\text{ and }\cat^*(A)\geq j\}\,,$$
	and we observe that $\Gamma_j\not=\emptyset$ (because $B\in\Gamma_j$). Moreover, the combination of Lemma \ref{deflem} and Lemma \ref{lem1-1x}{\em (ii)} implies that
	$$c_j:=\inf_{A\in\Gamma_j}\left(\sup_A\Phi\right)> c_0\,,\qquad j=1,\ldots k\,.$$
	
	\medbreak
	
	Since $\Gamma_j\supset\Gamma_{j+1}$ we see that $c_1\leq c_2\leq\ldots\leq c_k$. It remains to check that if $c:=c_j=c_{j+1}=\ldots=c_{j+p}$ then $K_c$ consists at least of $p+1$ points. Assuming by contradiction that $K_c=\{x_1,\ldots,x_p\}$ (not necessarily different), let $U:=B_r(x_1)\cup\ldots\cup B_r(x_p)$ be the union of open balls centered at the points of $K_c$, small enough so that $U\subset B\backslash\Phi_{c_0}$. Pick $\epsilon>0$ and a deformation $\eta\in\mathcal D^*$ as given by Lemma \ref{deflem}, and choose some set $A\in\Gamma_{j+p}$ with $\sup_{A}\Phi<c+\epsilon$. We set 
	$$A_1:=A\backslash U,\qquad A_2:=\overline{\eta(\{1\}\times A_1)}$$
and observe that, by Lemma \ref{lem1-1x},  $$\cat^*(A_2)\geq\cat^*(A_1)\geq\cat^*(A)-\cat_{\bar B}(\bar U)\geq(j+p)-p=j\,.$$ With other words, $A_2\in\Gamma_j$, implying that $\sup_{A_2}\Phi\geq c_j=c$ and contradicting \eqref{etdef}. It completes the proof of Lemma \ref{proplsx}.
\end{proof}

\end{lemma}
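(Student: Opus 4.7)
The plan is to adapt the classical Lusternik–Schnirelmann minimax scheme to the relative $*$-category setting, using the Deformation Lemma \ref{deflem} as the key tool. Set $k:=\cat^*(\bar B)$ and, for each $j\in\{1,\dots,k\}$, define the family
\[
\Gamma_j:=\{A\subset\bar B \text{ closed},\ \Phi_{c_0}\subset A,\ \cat^*(A)\geq j\}\,,
\]
noting $\bar B\in\Gamma_j$ so $\Gamma_j\neq\emptyset$, and form the minimax values $c_j:=\inf_{A\in\Gamma_j}\sup_A\Phi$. Since $\Gamma_{j+1}\subset\Gamma_j$, we get $c_1\leq c_2\leq\dots\leq c_k$.

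The first task is to show $c_j>c_0$ for all $j$. Indeed, if we had $c_j\le c_0$, then by definition of infimum there would exist $A\in\Gamma_j$ with $\sup_A\Phi$ arbitrarily close to $c_0$, but one could use Lemma \ref{deflem} (applied at some level slightly above $c_0$, with $U$ irrelevant here) to produce a deformation $\eta\in\mathcal D^*$ fixing $\Phi_{c_0}$ and pushing such an $A$ into $\Phi_{c_0}$; Lemma \ref{lem1-1x}(ii) would then give $\cat^*(A)\leq\cat^*(\Phi_{c_0})=0$, contradicting $\cat^*(A)\geq j\geq 1$.

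The heart of the argument is the standard multiplicity step: if $c:=c_j=c_{j+1}=\dots=c_{j+p}$ for some $p\geq 0$, then $K_c$ must contain at least $p+1$ distinct critical points. I would argue by contradiction, assuming $K_c=\{x_1,\dots,x_p\}$, and choosing a neighborhood $U=B_r(x_1)\cup\dots\cup B_r(x_p)$ small enough that $U\subset B\setminus\Phi_{c_0}$ and $\cat_{\bar B}(\bar U)\leq p$ (small balls are contractible in $\bar B$, hence each contributes $1$ to the ordinary category). Lemma \ref{deflem} supplies $\delta>0$ and $\eta\in\mathcal D^*$ fixing $\Phi_{c_0}$ and mapping $\Phi_{c+\delta}\setminus U$ into $\Phi_{c-\delta}$. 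Picking $A\in\Gamma_{j+p}$ with $\sup_A\Phi<c+\delta$, the set $A_1:=A\setminus U$ satisfies, by Lemma \ref{lem1-1x}(i),
\[
\cat^*(A_1)\geq\cat^*(A)-\cat_{\bar B}(\bar U)\geq (j+p)-p=j\,.
\]
Setting $A_2:=\overline{\eta(\{1\}\times A_1)}$, Lemma \ref{lem1-1x}(ii) yields $\cat^*(A_2)\geq\cat^*(A_1)\geq j$, so $A_2\in\Gamma_j$. But $A_2\subset\Phi_{c-\delta}$, forcing $c_j\leq c-\delta<c$, a contradiction.

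The main obstacle I expect is making sure each ingredient of the scheme is compatible with the restrictive class $\mathcal D^*$ of admissible deformations: the Deformation Lemma \ref{deflem} must deliver an $\eta\in\mathcal D^*$ (this is where the structure \eqref{eq9}, together with compactness of $\nabla\Psi$ and a suitable pseudogradient, is essential), and the monotonicity property of Lemma \ref{lem1-1x}(ii) must be invoked only with such $\eta$'s. Once those two lemmas are in hand, the minimax argument is purely formal and the counting of critical points is automatic.
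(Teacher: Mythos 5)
Your proposal is correct and follows essentially the same route as the paper: the classical Lusternik--Schnirelmann minimax scheme relativized to the $*$-category, with the identical families $\Gamma_j$, minimax values $c_j$, and the contradiction argument via Lemma \ref{deflem} combined with Lemma \ref{lem1-1x}. Your version is in fact slightly more explicit in spelling out why $c_j>c_0$ (deforming any low-lying $A$ into $\Phi_{c_0}$ and invoking $\cat^*(\Phi_{c_0})=0$), which the paper only sketches.
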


The reader may inquire about the fact that the condition $c_0<\inf_{B^0}\Phi$ in Definition \ref{fstn} has not yet been used. Its role will be clear in the proof of Lemma \ref{papx}, where it will allow us to estimate $\cat^*(\bar B)$ by constructing a retraction from the sublevel set $\Phi_{c_0}$ into $\partial^-B$ (over a suitable finite-dimensional reduction of the vector bundles).

\section{From infinite to finite-dimensional vector bundles}\label{sec7.1}
In view of Lemma \ref{proplsx}, all what remains to complete the proof of Theorem \ref{th1} is to show that if $B=B_K(r^-,r^+)\subset\Omega$  is a $K$-tubular neighborhood of the compact manifold $M$ which is furthermore of fibered saddle type relative to $\Phi$, then $\cat^*(\bar B)\geq\cl(M)+1$. We recall that the map $K:M\to\mathcal K(X)$ is assumed to belong to the class $[\mathcal C_d^1]$ and to satisfy \eqref{cK}-\eqref{eq43}.

\medbreak

Going back to Proposition \ref{lem212} we see that the sets $\mathfrak X_K^\pm$ (defined as in \eqref{eu14}) are continuously-differentiable vector subbundles of $M\times X$. The result below asserts that these splittings can be conveniently approximated by finite-dimensional counterparts.

\begin{prop}\label{corro}{Under the above there are sequences
		$$\mathfrak Y_n^-\subset\mathfrak X_K^-,\ \ \mathfrak Y_n^+\subset\mathfrak X_K^+\,,\qquad n\in\mathbb N\,,$$
		of finite-dimensional, topological vector subbundles of $\mathfrak X_K^\pm$, with associated fibers $Y_{n,m}^\pm$, $m\in M$, such that the subspaces $Y_{n,m}:=Y_{n,m}^-\oplus(X_{K_m}^0)\oplus Y_{n,m}^+$ satisfy:
		\begin{enumerate}
			\item[(i)] For each $m,m_1\in M$ and $s\in\mathbb R$ one has
			 $$\lim_{n\to+\infty}d(e^{sL}[Y_{n,m}],Y_{n,m_1})=0\,,$$
			and the convergence is uniform with respect to $s$ in compact intervals and $m,m_1\in M$,
			\item[(ii)] the subspaces $Y_{n,m}$ are close to dense in the sense that, for each $x\in X$,
			$$\lim_{n\to+\infty}\dist(x,Y_{n,m})=0\,,$$  and the convergence is uniform  with respect to $x$ in compact subsets of $X$ and $m\in M$.
			
		\end{enumerate}
	}\end{prop}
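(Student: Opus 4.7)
Under the normalization $L=\Pi_{X^+}-\Pi_{X^-}$ from Section~\ref{sec41a}, the operator $e^{sL}$ is diagonal with respect to $X=X^-\oplus X^0\oplus X^+$ and therefore preserves every subspace of the form $A^-\oplus A^0\oplus A^+$ with $A^\pm\subset X^\pm$, $A^0\subset X^0$. I would exploit this by choosing, once and for all, orthonormal sequences $\{f_i^\pm\}_i\subset X^\pm$ with dense spans---refined so that $\bigcup_n W_n$ also densely captures the compact (by Proposition~\ref{lem501}) family of subspaces $\{X_{K_m}^0,X_{K_m}^\pm:m\in M\}$, which is feasible by the separability of $X$ and of the union $\bigcup_m X_{K_m}^0\cup\Pi_{X_{K_m}^\pm}(\{f_i^\pm\})$. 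Setting $W_n^\pm:=\mathrm{span}\{f_1^\pm,\ldots,f_n^\pm\}$ and $W_n:=W_n^-\oplus X^0\oplus W_n^+$, this gives a nested sequence of $e^{sL}$-invariant finite-dimensional subspaces with $\bigcup_n W_n$ dense in $X$. I then define
\[
Y_{n,m}^\pm\,:=\,\Pi_{X_{K_m}^\pm}(W_n)\,\subset\,X_{K_m}^\pm\,,\qquad Y_{n,m}\,:=\,Y_{n,m}^-\oplus X_{K_m}^0\oplus Y_{n,m}^+\,,
\]
continuous in $m$ by Proposition~\ref{lem501}.

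\paragraph{Verification of the two conclusions.} The key observation is the inclusion $W_n\subset Y_{n,m}$ for every $m$: each $w\in W_n$ decomposes orthogonally as $w=\Pi_{X_{K_m}^-}w+\Pi_{X_{K_m}^0}w+\Pi_{X_{K_m}^+}w$, whose three summands sit respectively in $Y_{n,m}^-$, $X_{K_m}^0$ and $Y_{n,m}^+$. From this, (ii) is immediate, for $\mathrm{dist}(x,Y_{n,m})\leq\mathrm{dist}(x,W_n)\to 0$ uniformly for $x$ on compact subsets of $X$. For (i), since $e^{sL}W_n=W_n\subset Y_{n,m}\cap Y_{n,m_1}$, both $e^{sL}[Y_{n,m}]$ and $Y_{n,m_1}$ decompose as $W_n\oplus S$ with $S\subset W_n^\perp$, and $d(e^{sL}[Y_{n,m}],Y_{n,m_1})$ equals the Grassmannian distance between the residues $S_m:=e^{sL}[Y_{n,m}]\cap W_n^\perp$ and $S_{m_1}:=Y_{n,m_1}\cap W_n^\perp$. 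A direct computation shows that the explicit generators of these residues take the form $\Pi_{W_n^\perp}e^{sL}\Pi_{X_{K_m}^0+X_{K_m}^\pm}(f_i^\pm)$, $\Pi_{W_n^\perp}e^{sL}(\cdot)$ applied to orthonormal bases of $X^0_{K_m}$, and analogous terms for $m_1$. By the refined choice of $\{f_i^\pm\}$ these $W_n^\perp$-components tend to zero in norm, uniformly in $m,m_1\in M$ and $s$ in compact intervals, using the equicontinuity of $m\mapsto\Pi_{X_{K_m}^\pm},\Pi_{X_{K_m}^0}$ on the compact $M$ and the uniform continuity of $s\mapsto e^{sL}$ on compact intervals.

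\paragraph{Main obstacle.} The delicate point is guaranteeing that $\mathfrak Y_n^\pm$ are honest \emph{topological vector subbundles} of $\mathfrak X_K^\pm$, i.e.\ that $\dim Y_{n,m}^\pm$ is locally constant in $m$. The kernel $W_n\cap(X_{K_m}^\pm)^\perp$ of the projection $\Pi_{X_{K_m}^\pm}|_{W_n}$ can jump in dimension as $m$ varies, and this appears to be the technical heart of the proposition. I would resolve it either by a Baire-category genericity argument---the ``bad'' configurations of orthonormal $n$-tuples form meager subsets of $(X^-\times X^+)^n$ for each $m$, and compactness of $M$ allows a simultaneous full-rank choice---or, more robustly, by exploiting Proposition~\ref{lem212} to build $C^1$ sections of $\mathfrak X_K^\pm$ via local trivializations and a partition of unity subordinate to a finite cover of $M$, extracting an $n$-frame of constant rank through a fibrewise Gram--Schmidt procedure. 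Either route is routine in finite-dimensional bundle theory but requires careful bookkeeping to preserve compatibility with the projection construction used above.
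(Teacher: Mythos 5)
The proposal does \emph{not} follow the paper's approach, and in its central step it contains a genuine gap.

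\textbf{Where the paper and the student diverge.} The paper does not simply project the whole of an $L$-invariant finite-dimensional space $X_n$ onto $X_{K_m}^\pm$. Instead it first builds, \emph{inside} $X_n$, an adapted orthogonal decomposition $X_n=U_{n,m}^-\oplus F_{n,m}^0\oplus U_{n,m}^+$ via the operators $T_{n,m}$ of \eqref{Ta1} (and the Riesz projections \eqref{ker1231}), and only then sets $Y_{n,m}^\pm:=\Pi_{X_{K_m}^\pm}(U_{n,m}^\pm)$. This forces $\dim Y_{n,m}=\dim X_n$ and, more importantly, yields the quantitative estimate \eqref{xaq11}, $d(Y_{n,m},X_n)\to 0$ uniformly in $m$, from which \emph{(i)} is immediate because $X_n$ is $e^{sL}$-invariant and independent of $m$. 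Your construction $Y_{n,m}^\pm:=\Pi_{X_{K_m}^\pm}(W_n)$ projects \emph{all} of $W_n$ onto each summand and lacks both of these features.

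\textbf{The gap.} You correctly reduce \emph{(i)} to comparing the residues $S_m:=e^{sL}[Y_{n,m}]\cap W_n^\perp$ and $S_{m_1}:=Y_{n,m_1}\cap W_n^\perp$, but the argument you give for their closeness does not work. The key issue is that your $Y_{n,m}$ is, in general, \emph{strictly larger} than $W_n$: by rank--nullity,
\[
\dim S_{n,m} \;=\;\dim Y_{n,m}-\dim W_n \;=\; (2n+2d)\;-\;\dim\bigl(W_n\cap(X_{K_m}^-)^\perp\bigr)\;-\;\dim\bigl(W_n\cap(X_{K_m}^+)^\perp\bigr),
\]
and this is zero only if, among other things, $X_{K_m}^0\subset W_n$. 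Your ``refined'' choice of $\{f_i^\pm\}$ makes $\bigcup_n W_n$ dense, but density does not put $X_{K_m}^0$ inside any fixed $W_n$; for a typical map $K$ in the class $[\mathcal C_d^1]$ one has $X_{K_m}^0\not\subset W_n$ for every $n$ and every $m$, so $S_m\neq\{0\}$. As soon as $S_m\neq\{0\}$, one has $\delta(Y_{n,m},W_n)=1$, so $Y_{n,m}$ is \emph{not} close to $W_n$ in the Grassmannian metric, and the observation that the \emph{unnormalized} spanning vectors $\Pi_{W_n^\perp}e^{sL}\Pi_{X_{K_m}^\pm}(f_i^\pm)$ tend to $0$ in norm is irrelevant: that is a statement about vectors, not about the subspaces they span, and it gives no control at all on $d(S_m,S_{m_1})$. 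What one would need is convergence of the \emph{normalized} generators, i.e.\ that the directions of the residues stabilize; nothing in your argument addresses this, and a simple model (take $L+K_m$ whose kernel line moves through vectors outside every $W_n$, e.g.\ rotating $e_0$ towards a fixed direction with nonzero components in every coordinate) already produces one-dimensional residues pointing along $\Pi_{W_n^\perp}u(m)$ for a unit $u(m)\in X_{K_m}^0$; as $m$ varies, these unit directions need not converge to each other, so $d(S_m,S_{m_1})$ stays of order one.

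\textbf{Secondary concern.} You correctly flag that $\dim Y_{n,m}^\pm=\operatorname{rank}\bigl(\Pi_{X_{K_m}^\pm}|_{W_n}\bigr)$ can jump with $m$, so the claimed bundle structure is unestablished. But this is downstream of the bigger problem above: even if one could patch the rank to be locally constant, \emph{(i)} would still fail because $Y_{n,m}$ is not Grassmannian-close to any $m$-independent $e^{sL}$-invariant subspace. The paper sidesteps both difficulties simultaneously by restricting the projection to the spectral pieces $U_{n,m}^\pm$ of $T_{n,m}$, which keeps $\dim Y_{n,m}$ pinned to $\dim X_n$ and produces the uniform closeness \eqref{xaq11} that drives the whole proof. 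Your conclusion \emph{(ii)} is fine (it only needs $W_n\subset Y_{n,m}$ and the density of $\bigcup_n W_n$), but \emph{(i)} is not reached.
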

Proposition \ref{corro} is the fifth -and last- auxiliary result whose proof is postponed to the Appendix (Section \ref{App}), at the end of the paper.

\section{From infinite to finite-dimensional category and cuplength}\label{secf} Let us begin by recalling the finite-dimensional notion of relative category with respect to a subset. Given any metric space $E$ and closed subsets $A,E_0\subset E$ with $A\supset E_0$ it is said that {\em $A$ is of category $k\geq 0$ in $E$ relative to $E_0$}, denoted $\cat_{E,E_0}(A)=k$, provided that $k$ is the smallest integer for which there are closed sets
$\hat A_0,\hat A_1,\ldots,\hat A_k\subset E$ with $\hat A_0\supset E_0$, such that:
\begin{enumerate}
	\item[{\em (\,$\hat a$)}] $A=\hat A_0\cup\hat A_1\cup\ldots\hat A_k$,
	\item[{\em (\,$\hat b$)}]  $\hat A_i$ is contractible in $E$ for every $i\in\{1,\ldots,k\}$,
	\item[{\em (\,$\hat c$)}] there exists some deformation $\hat\eta$ of $E$ with
	\begin{equation}\label{k1}
\hat\eta(t,x)=x\text{ if }x\in E_0,\qquad \hat\eta(\{1\}\times\hat A_0)\subset E_0.
	\end{equation}
\end{enumerate}
	If such an integer $k$ does not exist, $\cat_{E,E_0}(A):=+\infty$. In case $E_0=\emptyset$ then necessarily $\hat A_0=\emptyset$ and thus $\cat_{E,\emptyset}\equiv\cat_E$ becomes the usual Lusternik-Schnirelmann category. An important difference between the $*$-category introduced in Definition \ref{catx} and this new notion of category lies in the fact that here $\hat\eta$ is {\em any} deformation of $E$ while in Definition \ref{catx} the deformation $\eta$ was assumed to belong to the class $\mathcal D^*$. We shall show the following:
\begin{lemma}\label{papx}{Under the framework of Proposition \ref{corro}, for $n$ big enough the finite-dimensional, topological vector subbundle ${\mathfrak Y^-}:=\mathfrak Y_n^-$ satisfies that 		$\cat_{E,\,E_0}(E)\leq\cat^*(\bar B)$. Here,
		\begin{eqnarray}
			E:=\{m+y^-:(m,x^-)\in{\mathfrak Y^-},\ \|y^-\|\leq r^-\},\label{tlb1}\\ E_0:=\{m+y^-:(m,y^-)\in\mathfrak Y^-,\ \|y^-\|=r^-\}\,.\label{tlb2}
		\end{eqnarray}}
	\end{lemma}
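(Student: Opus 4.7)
The strategy is to realize $\cat_{E,E_0}(E)\le\cat^*(\bar B)$ by starting with a $*$-covering of $\bar B$ and restricting it to $E$, via a continuous map $\pi\colon\bar B\to E$ that pushes $\Phi_{c_0}$ into $E_0$. Two elementary inclusions underpin the argument: in the tubular coordinates of $B$, the set $E\subset\bar B$ (take $v^+=0$), and $E_0\subset\partial^-B\subset\Phi_{c_0}$ by the fibered-saddle inequality $\sup_{\partial^-B}\Phi<c_0$ of Definition~\ref{fstn}\emph{(ii)}.

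The crucial estimate is a uniform lower bound on the negative coordinate along $\Phi_{c_0}$: if $x=m+v^-+v^+$ satisfies $\Phi(x)\le c_0$, then $m+v^+\in B^0$ gives $\Phi(m+v^+)\ge\inf_{B^0}\Phi>c_0$, and the mean-value inequality yields
\[
\|v^-\|\;\ge\;\frac{\inf_{B^0}\Phi-c_0}{\|\nabla\Phi\|_{L^\infty(\bar B)}}\;=:\;\delta\;>\;0,
\]
the denominator being finite because $\bar B$ is bounded and $\nabla\Phi=Lx+\nabla\Psi(x)$ is bounded on bounded sets. Choosing $\epsilon\in(0,\alpha)$ with $\alpha:=\inf_{B^0}\Phi-c_0$, the same estimate gives $\|v^-\|\ge\delta/2$ on the slightly larger sublevel $\Phi_{c_0+\epsilon}$, and combining with Proposition~\ref{corro} one has $\|\Pi_{Y_{n,m}^-}(v^-)\|\ge\delta/4$ on $\Phi_{c_0+\epsilon}$ for all $n$ sufficiently large.

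I would then define
\[
\pi(m+v^-+v^+):=m+\mu(x)\,\Pi_{Y_{n,m}^-}(v^-),\qquad \mu(x):=1+\chi(\Phi(x))\Big(\frac{r^-}{\|\Pi_{Y_{n,m}^-}(v^-)\|}-1\Big),
\]
where $\chi\colon\mathbb R\to[0,1]$ is continuous with $\chi=1$ on $(-\infty,c_0]$ and $\chi=0$ on $[c_0+\epsilon,\infty)$. Continuity of $\pi$ on $\bar B$ follows from the $\|\Pi\|$-bound on the support of $\chi\circ\Phi$ together with the continuous dependence of $\Pi_{Y_{n,m}^-}$ on $m$ (Proposition~\ref{lem212}); the estimate $\mu\,\|\Pi\|=(1-\chi)\|\Pi\|+\chi r^-\le r^-$ gives $\pi(\bar B)\subset E$; $\pi|_{E_0}=\id_{E_0}$ because $\|y^-\|=r^-$ forces $\mu=1$; and $\pi(\Phi_{c_0})\subset E_0$ because $\chi=1$ there forces $\mu\,\|\Pi\|=r^-$. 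The restriction $\pi|_E$ is not the identity in general, but the straight-line fiberwise map $H(s,m+y^-):=m+(1+s(\mu(m+y^-)-1))\,y^-$ is a deformation of $E$ from $\id_E$ to $\pi|_E$ that fixes $E_0$.

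Finally, let $k:=\cat^*(\bar B)$ and choose $A_0,\ldots,A_k\subset\bar B$ with deformation $\eta\in\mathcal D^*$ as in Definition~\ref{catx}; set $\hat A_i:=A_i\cap E$. Then $E=\bigcup_i\hat A_i$ and $\hat A_0\supset E_0$. For each $i\ge1$ the juxtaposition of $H$ with the composition of $\pi$ with any contraction $h_i$ of $A_i$ in $\bar B$ contracts $\hat A_i$ inside $E$; and $\hat\eta:=H\star(\pi\circ\eta)$ is a continuous deformation of $E$ satisfying $\hat\eta(0,x)=x$, $\hat\eta(t,x)=x$ on $E_0$ (since both $H$ and $\pi\circ\eta$ fix $E_0$), and $\hat\eta(1,\hat A_0)\subset\pi(\Phi_{c_0})\subset E_0$. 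This yields $\cat_{E,E_0}(E)\le k=\cat^*(\bar B)$. The main obstacle is the design of $\pi$: the uniform bound $\|v^-\|\ge\delta$ on $\Phi_{c_0}$ derived from the fibered-saddle gap is exactly what keeps the radial factor $r^-/\|\Pi_{Y_{n,m}^-}(v^-)\|$ bounded where it matters and allows the cutoff $\chi$ to glue continuously with the pure projection off $\Phi_{c_0+\epsilon}$.
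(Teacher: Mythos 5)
The decisive step fails: the claimed uniform lower bound ``$\|\Pi_{Y^-_{n,m}}(v^-)\|\ge\delta/4$ on $\Phi_{c_0+\epsilon}$ for all $n$ sufficiently large'' cannot be deduced from Proposition~\ref{corro}, and it is in fact false whenever the fibers $X^-_{K_m}$ are infinite-dimensional. Proposition~\ref{corro}\emph{(ii)} gives $\dist(x,Y_{n,m})\to 0$ \emph{uniformly on compact subsets of $X$}, but the set of negative components $v^-$ occurring in $\Phi_{c_0}$ is not compact: from $\sup_{\partial^-B}\Phi<c_0$ one has $\partial^-B\subset\Phi_{c_0}$, and $\partial^-B$ contains every point $m+v^-$ with $v^-\in X^-_{K_m}$, $\|v^-\|=r^-$. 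Since $Y^-_{n,m}$ is a \emph{finite-dimensional} subspace of $X^-_{K_m}$, one can choose $v^-$ of norm $r^-$ with $v^-\perp Y^-_{n,m}$, giving a point of $\Phi_{c_0}$ with $\Pi_{Y^-_{n,m}}v^-=0$. Thus the radial factor $r^-/\|\Pi_{Y^-_{n,m}}(v^-)\|$ in your $\mu$ is unbounded exactly where $\chi\circ\Phi=1$, and no continuous map $\pi\colon\bar B\to E$ with $\pi(\Phi_{c_0})\subset E_0$ of this form can exist.

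This is not a fixable technicality; it is the real obstacle the lemma must overcome, and it is the reason the whole paper restricts attention to the admissible class $\mathcal D^*$. The paper's proof never needs to project an arbitrary $v^-\in X^-_{K_m}$: the map $\bar\eta$ is defined only on $E$, where $y^-\in Y^-_{n,m}$, and then uses the very specific structure $\eta(1,x)=\exp(\theta L)x+C(1,x)$ with $\theta$ bounded and $C$ compact. From this, $\eta_-(1,m+y^-)$ decomposes (see \eqref{etamen}--\eqref{eK:2}) as a compact term $\tilde C(m+y^-)$, which Proposition~\ref{corro}\emph{(ii)} shows lies near $Y_{n,m_1}$ (here the relevant set \emph{is} compact), plus the image of $y^-\in Y^-_{n,m}$ under $\exp(\theta L)$, and Proposition~\ref{corro}\emph{(i)} shows $\exp(\theta L)[Y_{n,m}]$ is near $Y_{n,m_1}$. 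This is precisely where both clauses of Proposition~\ref{corro} are used and why deformations in $\mathcal D^*$, rather than arbitrary continuous ones, are required. Your proposal discards all of this structure by building a global retraction $\pi$ and hence runs into the infinite-dimensional noncompactness problem head-on.

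A secondary issue: even granting $\pi$, the juxtaposition $H\star(\pi\circ\eta)$ gives $\hat\eta(1,x)=\pi\bigl(\eta(1,H(1,x))\bigr)=\pi\bigl(\eta(1,\pi(x))\bigr)$, and there is no reason $\pi(x)\in A_0$ for $x\in\hat A_0$, since $\pi|_{\hat A_0}$ is not the identity. You would need to replace $\star$ by an ad hoc gluing running $H$ on $[0,1/2]$ and $t\mapsto\pi(\eta(2t-1,x))$ on $[1/2,1]$, which does glue continuously because $H(1,x)=\pi(x)=\pi(\eta(0,x))$. This is repairable; the preceding gap is not.
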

As a preparation for the proof we observe that there exists a positive number $0<\rho<r^-$ such that
		\begin{equation}\label{inc654}
		\Phi_{c_0}\subset\mathcal E_\rho:=\{m+v^-+v^+\in\bar B:\|v^-\|\geq\rho\}\,. 	
		\end{equation}
(Keeping our previous notation the points of $x\in\bar B$ are written in the form   $x=m+v^-+v^+$ where $m\in M$ and $v^\pm\in X_{K_m}^\pm$ satisfy $\|v^\pm\|\leq r^\pm$).	In order to argue this assertion we point out that $\Phi$ is Lipschitz-continuous on $\bar B$; this can be checked using that $\nabla\Phi$ is bounded on $B$, and, while $B$ may not be convex, it is fiberwise convex and $M$ is a compact manifold. Together with the inequality $\inf_{B^0}\Phi>c_0$ imposed in Definition \ref{fstn}{\em (ii)} one can deduce the existence of $\rho$ as required. 
		
		\medbreak

		Also, it will be convenient to introduce some notation. The projections of $\bar B$ onto the basis and the fibers of the bundle will be denoted by
		$$p_M,p_\pm:\bar B\to X\,,\qquad p_M(m+v^-+v^+)=m\,,\qquad p_\pm(m+v^-+v^+)=v^\pm\,,$$

		\medbreak
		
		Since  $B=B_K(r^-,r^+)$ is a  $K$-tubular neighborhood of $M$, the discussion in Section \ref{sec5} implies that the three projections are continuous. Given a deformation $\eta$ of $\bar B$ we define
		$$\eta_M=p_M\circ\eta,\qquad\eta_\pm=p_\pm\circ\eta\,.$$
		After these preliminaries we are ready to present the proof.
		
			\begin{proof}[Proof of Lemma \ref{papx}] Pick some {\em big} natural $n\in\mathbb N$ (the meaning of the word `big' will be made precise later), and let $\mathfrak Y^-$, $E$ and $E_0$ be defined as above.  If $\cat^*(\bar B)=+\infty$ then there is nothing to prove. Therefore  we assume that $k:=\cat^*(\bar B)<+\infty$, and pick closed subsets
		$A_0,A_1,\ldots,A_k\subset\bar B$ as in Definition \ref{catx} for $A:=\bar B$. We define 
		$$\hat A_i:=A_i\cap E\,,\qquad 0\leq i\leq k\,.$$
		Notice that $E_0\subset\hat A_0$. This follows from the inclusions $$E_0=\partial_K^-B\cap E\subset\partial_K^-B\subset\Phi_{c_0}\,,$$ the last one being again a consequence of Definition \ref{fstn}{\em (ii)}.
		
		\medbreak
		
		To prove the lemma it is sufficient to show that the sets $\hat A_i$ satisfy the conditions {\em (\,$\hat a$)}, {\em (\,$\hat b$)}, {\em (\,$\hat c$)} with $A=E$. Since {\em (\,$\hat a$)} is obvious we start with {\em (\,$\hat b$)}. Given $i\geq 1$ we consider a deformation $\eta$ of $A_i$ in $\bar B$ such that $\eta(\{1\}\times A_i)$ is a singleton. We define, for every  $t\in[0,1]$ and $m+y^-\in\hat A_i$,
		\begin{equation*}
		\hat\eta(t,m+y^-):=m_1+\Pi_{Y_{n,m_1}^-}\big[\eta_-(t,m+y^-)\big]\,,
		\end{equation*}
		with $m_1=\eta_M(t,m+y^-)$. Now $\hat\eta$ is a deformation of $\hat A_i$ on $E$ taking $\{1\}\times\hat A_i$ into a singleton, and so $\hat A_i$ is contractible in $E$.

		\medbreak
		
The proof of  (\,$\hat c$) is more delicate and needs the assumption that $n$ is big, which has not been used yet. Let $\eta\in\mathcal D^*$ be in the conditions of Definition \ref{catx}{\em (c)} with $A=\bar B$, and let the deformation $\bar\eta$ of $E$ be defined by
\begin{equation*}
	\bar\eta(t,m+y^-):=m_1+\Pi_{Y_{n,m_1}^-}\big[\eta_-(t,m+y^-)\big]\,,
\end{equation*}
 with $m_1=\eta_M(t,m+y^-)$. We shall prove that for $n$ big enough one has the inclusion
\begin{equation}\label{claimx012}
	\bar\eta(\{1\}\times\hat A_0)\subset\mathcal E_{\rho/2}\cap E=:\widehat{\mathcal E}_{\rho/2}\,.
	\end{equation}
Once it has been shown, it suffices to juxtapose $\bar\eta$ with some deformation $h$ of $E$ such that 
$$h(t,x)=x\text{ if }x\in E_0\,,\qquad h(\{1\}\times\widehat{\mathcal E}_{\rho/2})=E_0\,,$$
for it is clear that $\hat\eta:=\bar\eta\star h$ is a deformation of $E$ satisfying {\em (\,$\hat c$)}.

\medbreak

In order to check \eqref{claimx012} we recall that  $\bar\eta([0,1]\times E)\subset E$ and so we only have to show the inequality 
\begin{equation}\label{xy12}
\left\|\Pi_{Y_{n,m_1}^-}\big[\eta_-(1,m+y^-)\big]\right\|\geq\frac{\rho}{2}\,,\qquad m+y^-\in E\,,
\end{equation}
but by combining the inclusion $\eta(\{1\}\times A_0)\subset\Phi_{c_0}$ with \eqref{inc654} we see that
\begin{equation}\label{xy13}
\left\|\eta_-(1,m+y^-)\right\|\geq\rho\,,\qquad m+y^-\in E\,.
\end{equation}

Pick functions $\theta=\theta(t,x)$ and $C=C(t,x)$ as in \eqref{eq9}-\eqref{eq07} and observe that
\begin{equation}\label{etamen}
\eta_-(1,m+y^-)=\tilde C(m+y^-)+\Pi_{X_{K_{m_1}}^-}\left[\exp(\theta(1,m+y^-)L)y^-\right]\,,
\end{equation}
the function $\tilde C:\bar B\to X$ being given by $$\tilde C(x)=\Pi_{X_{K_{m_1}}^-}\Big[\exp\big(\theta(1,x)L\big)m+C(1,x)-m_1\Big]\,,$$
(we set $m_1:=\eta_M(1,x)$). Consequently, \eqref{etamen} gives the equality 
\begin{equation}\label{eK:1}
(\id_X-\Pi_{Y_{n,m_1}^-})\eta_-(1,m+y^-)=s_1+s_2\,,
\end{equation}
where 
\begin{equation}\label{eK:3}
	s_1=(\id_X-\Pi_{Y_{n,m_1}^-})\tilde C(m+y^-)=(\id_X-\Pi_{Y_{n,m_1}})\tilde C(m+y^-)\,,	
\end{equation}
and also
\begin{multline}\label{eK:2}
s_2=(\id_X-\Pi_{Y_{n,m_1}^-})\Pi_{X_{K_{m_1}}^-}\big[\exp(\theta(1,m+y^-)L)y^-\big]=\\=\Pi_{X_{K_{m_1}}^-}(\Pi_{(\exp(\theta(1,m+y^-)L)[Y_{n,m}]}-\Pi_{Y_{n,m_1}})\big[\exp(\theta(1,m+y^-)L)y^-\big]\,,
\end{multline}
 where we have used that $\Pi_{Y_{n,m_1}^-}\circ\Pi_{X_{K_{m_1}}^-}=\Pi_{Y_{n,m_1}^-}=\Pi_{X_{K_{m_1}}^-}\circ\Pi_{Y_{n,m_1}}$.

\medbreak

Notice now that $\tilde C$ is compact on $\bar B$, i.e., the closure $K$ of $\tilde C(\bar B )$ is compact in $X$. Thus, \eqref{eK:3} gives
$$\|s_1\|\leq\max_{x\in K,\,m\in M}\dist(x,Y_{n,m})\to 0\text{ as }n\to+\infty\,,$$
where we have used Proposition \ref{corro}{\em (ii)}. On the other hand, it follows from \eqref{eK:2} that
\begin{multline*}
\|s_2\|\leq\left\|(\Pi_{(\exp(\theta(1,m+y^-)L)[Y_{n,m}]}-\Pi_{Y_{n,m_1}})\big[\exp(\theta(1,m+y^-)L)y^-\big]\right\|\leq\\ \leq\gamma\, d\big((\exp(\theta(1,m+y^-)L)[Y_{n,m}],Y_{n,m_1}\big)\,,
\end{multline*}
with $\gamma=r^-\exp\big(\sup_{[0,1]\times\bar B}|\theta|\big)$. Here we have used that $\|L\|_{\mathscr L(X)}=1$, by \eqref{eu12}. Consequently, Proposition \ref{corro}{\em (i)} implies that $\|s_2\|$ will also be small for big $n$, uniformly with respect to $t\in[0,1]$ and independently of $m+y^-\in E$.

\medbreak

Now, going back to \eqref{eK:1} we see that if $n$ is big enough one has
$$\|(\id_X-\Pi_{Y_{n,m_1}^-})\eta_-(1,m+y^-)\|<\frac{\rho}{2}\,,\qquad m+y^-\in E\,,$$
and, together with \eqref{xy13} we arrive to \eqref{xy12}. It completes the proof.
	\end{proof}

We recall that the {\em cuplength } $\cl(M)$ of $M$ (associated to singular cohomology with $\mathbb Z_2$ coefficients) is the maximal integer $k$ for which there are natural numbers $q_1,\ldots,q_k\geq 1$ and cohomology classes $\alpha_j\in H^{q_j}(M)$, $1\leq j\leq k$, such that $\alpha_1\cup\ldots\alpha_k\not=0$. By Poincar\'e's duality theorem, $H^n(M)\cong H_0(M)\cong\mathbb Z_2$, and so $\cl(M)\geq 1$.

\begin{proof}[Proof of Theorem \ref{th1}]It suffices now to combine lemmas \ref{proplsx}-\ref{papx} with the inequalities
	$$\cat_{E,E_0}(E)\geq\cl(E,E_0)\geq 1+\cl(M)\,,$$
where $E,E_0$ are the sets in \eqref{tlb1}-\eqref{tlb2}. Now, the first inequality is a well-known relation between relative category and cuplength, see e.g.  \cite[Proposition 2.6, p. 728]{szu} or \cite[Theorem 1, p. 97]{fouwil2}. The second part is also well-known and follows from Thom's isomorphism theorem relating the cohomology groups of $H^q(M)$ with the relative cohomology groups $H^{q+d}(E,E_0)$ of the spheric bundle $E$ with respect to its boundary $E_0$. See, e.g., \cite[Lemma 3.7, p. 733]{szu}. 
\end{proof}

\section{Appendix}\label{App}

We prepare the proof of Proposition \ref{lem4.1} by reviewing some well-known results on the   spectral theory of selfadjoint operators in the Hilbert space $X$, for which we refer to \cite[\S 6.7-6.8]{fri}, \cite[\S 132-134]{riebel}. We recall that the family $\{E_\lambda\}_{\lambda\in\mathbb R}$ of continuous orthogonal projections in $X$ is said to be a {\em spectral family} provided that it is nondecreasing, right-continuous and bounded, in the sense that:
\begin{enumerate}
	\item[{\em (a).}] $E_{\lambda_
		2}\circ E_{\lambda_1}=E_{\lambda_
		1}\circ E_{\lambda_2}=E_{\lambda_1}$ if $\lambda_1<\lambda_2$ {\em (nondecreasing)},
	\item[{\em (b).}] 
	$\lim_{\lambda_2\to(\lambda_1)_+}E_{\lambda_2}=E_{\lambda_1}$ in $\mathscr L(X)$, for every $\lambda_1\in\mathbb R$ {\em (right-continuous)},
	\item[{\em (c).}] There exists some constant $\Lambda>0$ such that
	$E_\lambda=0$ if $\lambda\leq-\Lambda$ and $E_\lambda=\id_X$ if $\lambda\geq\Lambda$ {\em (bounded)}.
	\end{enumerate}
	
	For such an spectral family and a given continuous function $f:\mathbb R\to\mathbb R$, the integral $\int_{-\infty}^{+\infty}f(\lambda)dE_\lambda$ is the operator in $\mathscr L(X)$ obtained by a limiting process from the sums of Riemann-Stieltjes type
	\begin{equation}\label{Tx1}
	\sum f(\xi_k)(E_{\lambda_{k+1}}-E_{\lambda_k})\,,
	\end{equation}
	where $\{\lambda_k\}$ is a partition of $[-\Lambda,\Lambda]$ and each $\xi_k$ lies in the interval $[\lambda_k,\lambda_{k+1}]$.
	
	\medbreak
	
	The {\em fundamental theorem on spectral decomposition} (see, e.g., \cite[p. 320]{riebel}) asserts that {\em for every continuous selfadjoint operator $T:X\to X$ there exists a unique spectral family $\{E_\lambda\}_{\lambda\in\mathbb R}$  such that $T=\int_{\mathbb R}\lambda\, dE_\lambda$\,.}	In addition, for each $x\in X$ one has the equality
		\begin{equation}\label{txx123}
	\langle Tx,x\rangle=\int_{-\infty}^\infty\lambda\, d\xi_x(\lambda)\,,
	\end{equation}
	where $\xi_x(\lambda)=\langle E_\lambda x,x\rangle$ and the integral is to be considered in the Riemann-Stieltjes sense. Notice that the functions $\xi_x:\mathbb R\to\mathbb R$ are nondecreasing, continuous from the right, and satisfy 
$$\xi_x(\lambda)=0\text{ if }\lambda\leq-\Lambda\,,\qquad \xi_x(\lambda)=\|x\|^2 \text{ if }\lambda\geq\Lambda\,,$$
	so that the right hand side of \eqref{txx123} is well-defined.
	
	\medbreak
	
We observe that as a consequence of {\em (a)} each $E_\lambda$ commutes with the sums \eqref{Tx1}, implying that
	\begin{equation}\label{commute}
		T\circ E_\lambda=E_\lambda\circ T\,,\qquad\lambda\in\mathbb R\,.
	\end{equation}

\medbreak

	On the other hand, if $x\in E_{\lambda_0}(X)$ then $\xi_x(\lambda)=\|x\|^2$ for every $\lambda\geq\lambda_0$, implying that
\begin{equation}\label{ineq345}
\langle Tx,x\rangle=\int_{-\infty}^{\lambda_0}\lambda\, d\xi_x(\lambda)\leq\lambda_0\int_{-\infty}^{\lambda_0}d\xi_x(\lambda)=\lambda_0\|x\|^2\,,
\end{equation}
where we have used that $\int_{-\infty}^{+\infty} d\xi_x(\lambda)=\|x\|^2$. Similarly one checks the inequality
\begin{equation}\label{ineq456}
	\langle Ty,y\rangle>\lambda_0\|y\|^2\,,\qquad 0\not=y\in\ker(E_{\lambda_0})=E_{\lambda_0}(X)^\bot\,.
\end{equation}

	\medbreak
	
In addition we point out that the spectrum of $T$ can be characterized in terms of its associated spectral family $\{E_\lambda\}$. Indeed, a number $\lambda_0\in\mathbb R$ belongs to the resolvent set of $T$ if and only if the map $\lambda\in\mathbb R\mapsto E_\lambda\in\mathscr L(X)$ is locally constant around $\lambda_0$. On the other hand the eigenvalues $\lambda_0$ of $T$ are characterized as the points of discontinuity of this map. In this case $\ker(T-\lambda_0\id_X)$ is the image of the orthogonal projection $E_{\lambda_0}-E_{\lambda_0^-}$, with $E_{\lambda_0^-}=\lim_{\lambda\nearrow\lambda_0}E_\lambda$.
	
	\medbreak

	\begin{proof}[Proof of Proposition \ref{lem4.1}]We claim that if $0\in\sigma(L)$ then it is an isolated eigenvalue. More precisely, 
		\begin{equation}\label{eqx1y}
		L(X)=(\ker L)^\bot
		\end{equation}
		 and there exists some $r>0$ such that 
		\begin{equation}\label{eqL7}
	\big(\sigma(L)\backslash\{0\}\big)\cap[-r,r]=\emptyset\,.
		\end{equation}
		To prove these statements we first observe that $\tilde X=L(X)$ is closed in $X$, because $L$ is a Fredholm operator. Then, since it is also selfadjoint, $\tilde X=(\ker L)^\bot$ and the orthogonal splitting $X=\tilde X\oplus\ker L$ is invariant under $L+\lambda\id_X$ for each $\lambda\in\mathbb R$. For $\lambda=0$ the restriction of $L$ to $\tilde X$ defines a continuous and bijective operator $L_{\tilde X}:\tilde X\to\tilde X$. Hence, $L_{\tilde X}$ is a topological isomorphism. This property is open and therefore $L_{\tilde X}+\lambda\id_{\tilde X}:\tilde X\to\tilde X$ will also be a topological isomorphism when $|\lambda|$ is small, say $|\lambda|\leq r$. For $0<|\lambda|\leq r$ we can express $L+\lambda\id_X:X\to X$ as a direct sum of topological isomorphisms. In consequence, $[-r,r]\backslash\{0\}$ is contained in the resolvent set, thus showing \eqref{eqL7}. 
		
		\medbreak
		
		 Let $\{E_\lambda\}$ be the spectral family associated to $L$. Then it follows from  \eqref{eqL7} that the map $\lambda\mapsto E_\lambda$ is constant, both on $[-r,0[$ and on $[0,r]$. With other words,
		 \begin{equation}\label{todo3}
		 E_\lambda=E_{-r}\text{ if }-r\leq\lambda<0\,,\qquad\qquad E_\lambda=E_{r}\text{ if }0\leq\lambda\leq r\,. 
		 \end{equation}
				
		It is now clear that $\ker L=(E_r-E_{-r})(X):=X^0$. Set 
		\begin{equation}\label{eqxg}
		X^-:=E_{-r}(X)\,,\qquad X^+:=\ker E_r\,.	
		\end{equation}
	It follows from  \eqref{commute} that $X^\pm$ are $L$-positively invariant, and \eqref{eqx1y} implies that they are indeed $L$-invariant. Moreover, the inequalities \eqref{inex} follow from \eqref{ineq345}-\eqref{ineq456} with $\lambda_0=\pm r$.

\medbreak

It remains to show the uniqueness part of the statement. To this aim assume that $X=\widetilde X^-\oplus(\ker L)\oplus\widetilde X^+$ is another orthogonal splitting under the conditions of Proposition \ref{lem4.1}. Then $L$ induces two continuous, selfadjoint operators $L^\pm:\widetilde X^\pm\to\widetilde X^\pm$. Each of these operators has an associated spectral family $\{\widetilde E_\lambda^{\pm}\}_{\lambda\in\mathbb R}$. For sufficiently small $r_1>0$ the orthogonal projections $\widetilde E_\lambda^{\pm}:\widetilde X^\pm\to \widetilde X^\pm$ satisfy
\begin{equation}\label{exqx}
\widetilde E_\lambda^{+}=0\text{ if }\lambda\leq r_1\,,\qquad\widetilde  E_\lambda^{-}=\id_{\widetilde X^-}\text{ if }\lambda\geq-r_1\,.
\end{equation}

We consider the family of projections $\{\widetilde E_\lambda\}\subset\mathscr L(X)$  by setting: 
$$\widetilde E_\lambda:=\begin{cases}
\widetilde E_\lambda^-\circ\Pi_{\widetilde X^-}&\text{ if }\lambda<0\,,\\
\Pi_{\widetilde X^-}+\Pi_{\ker L}+\widetilde E_\lambda^{+}\circ\Pi_{\widetilde X^+}&\text{ if }\lambda\geq 0\,.
\end{cases}$$

One easily checks that this is a spectral family associated to $L$, and by uniqueness $\widetilde E_\lambda=E_\lambda$ for every $\lambda\in\mathbb R$. Consequently, for small $r_1>0$ the combination of \eqref{todo3}, \eqref{eqxg} and \eqref{exqx} gives  $$X^-=E_{-r_1}(X)=\widetilde X^-,\qquad X^+=\ker E_{r_1}=\widetilde X^+\,,$$ thus concluding the argument.
\end{proof}

We continue now with our discussion previous to the proof of Proposition \ref{lem4.1}. An important property of the spectral family associated to the continuous selfadjoint operator $T\in\mathscr L(X)$ is that for every real numbers $\lambda_1<\lambda_2$ with $\lambda_j\not\in\sigma(T)$ one has
\begin{equation}\label{proj}
	E_{\lambda_2}-E_{\lambda_1}=\frac{1}{2\pi i}\int_\gamma(z\id_X-T)^{-1}dz\,.
\end{equation}
The right-hand side is a Cauchy-type, operator-valued integral. Equality \eqref{proj} holds for any continuously-differentiable, positively-oriented Jordan curve $\gamma\subset\mathbb C\backslash\sigma(T)$ satisfying
$$\intt(\gamma)\cap\sigma(T)=[\lambda_1,\lambda_2]\cap\sigma(T)\,,$$ 
where $\intt(\gamma)$ stands for the (open) bounded connected component of $\mathbb C\backslash\gamma$. See, e.g., \cite[\S 148]{riebel}.

\medbreak

In particular, under the framework of Proposition \ref{lem4.1} we see that \eqref{eqL7} implies the existence of continuously-differentiable, positively-oriented Jordan curves $\gamma^0,\gamma^-,\gamma^+\subset\mathbb C\backslash\sigma(L)$ satisfying
$$\intt(\gamma^0)\cap\sigma(L)\subset\{0\}\,,\qquad\intt(\gamma^\pm)\cap\sigma(L)=\sigma(L)\cap\mathbb R^\pm\,,$$
where $\mathbb R^\pm$ denote the open intervals of positive and negative numbers. In this situation \eqref{proj} gives
\begin{equation}\label{ker123}
	\Pi_{X^0}=\frac{1}{2\pi i}\int_{\gamma^0}(z\id_X-L)^{-1}dz\,,\qquad \Pi_{X^\pm}=\frac{1}{2\pi i}\int_{\gamma^\pm}(z\id_X-L)^{-1}dz\,.
\end{equation}

\begin{proof}[Proof of Proposition \ref{lem501}] It will be divided into three steps. Several times we will extract subsequences that will be denoted as the original sequence. Choose  convergent sequences $K_n\to K_*$ in $\mathcal K_p(X)$, we must prove that $X_{K_n}^\pm\to X_{K_*}^\pm$ and $X_{K_n}^0\to X_{K_*}^0$. This last assertion will be checked first. 
		
		\medbreak
		
		{\em First step: $X_{K_n}^0\to X_{K_*}^0$}. For each $n\in\mathbb N$ we pick some orthonormal basis $\{b_n^{(1)},\ldots,b_n^{(p)}\}$ of $X_{K_n}^0$. Since $(L+K_*)b_n^{(j)}\to 0$, the linear operator $L$ is Fredholm and $K_*$ is compact, after passing to a subsequence we may assume that $b_n^{(j)}\to b_*^{(j)}$, $j=1,\ldots,p$. Then $\{b_*^{(1)},\ldots,b_*^{(p)}\}$ is an orthonormal system contained in $X_{K_*}^0$. Since $\dim X_{K_*}^0=p$ we conclude that it is indeed a basis. The orthogonal projection into $X_{K_n}^0$ can be expressed as 
		$$\Pi_{X_{K_n}^0}x=\sum_{j=1}^p\langle x,b_n^{(j)}\rangle\, b_n^{(j)}\,,\qquad x\in X\,,$$
		and so $\Pi_{X_{K_n}^0}\to\Pi_{X_{K_*}^0}$ in $\mathscr L(X)$. It proves the first step.

		\medbreak
		
		{\em Second step: There exists some $r_1>0$ such that 
			\begin{equation}\label{r1xc}
			(\sigma(L+K_n)\backslash\{0\})\cap[-r_1,r_1]=\emptyset,\qquad\text{ for every }n\in\mathbb N\,.
			\end{equation}
			}

		By contradiction assume that, after possibly passing to a subsequence, it is possible to find numbers $0\not=\lambda_n\in\sigma(L+K_n)$ with $\lambda_n\to 0$. We know that $\sigma_{ess}(L+K_n)=\sigma_{ess}(L)$ is closed in $\mathbb R$, see \cite[p. 363]{riebel}. Since $0\not\in\sigma_{ess}(L)$ we conclude that $\lambda_n$ must be an eigenvalue of $L+K_n$ for large $n$. Select an associated eigenvector $u_n\in X$ with $\|u_n\|=1$ and $(L+K_n)u_n=\lambda_n u_n$. Since $\lambda_n\not=0$ we observe that
		\begin{equation}\label{RO1}
			u_n\in (L+K_n)(X)=(X_{K_n}^0)^\bot\,,
		\end{equation}
	but we also have that
	\begin{equation}\label{RO2}
		(L+K_*)u_n\to 0\,.
	\end{equation}	
Combining again the fact that $L$ is Fredholm and $K_*$ is compact it is possible to extract a converging subsequence $u_n\to u_*$. Passing to the limit in \eqref{RO1} we deduce that $u_*\in(X_{K_n}^0)^\bot$. Here we are using that $d((X_{K_n}^0)^\bot,(X_{K_*}^0)^\bot)=d(X_{K_n}^0,X_{K_*}^0)\to 0$, by the first step. On the other hand, passing to the limit in \eqref{RO2} we see that $u_*\in X_{K_*}^0$. This is the searched contradiction because $u_*\in X_{K_*}^0\cap(X_{K_*}^0)^\bot$ and $\|u_*\|=1$.

\medbreak

	{\em Third step:  $X_{K_n}^\pm\to X_{K_*}^\pm$}. We combine the second part of \eqref{ker123} -with $L$ being replaced by $L+K_n$ and $L+K_*$- with the theorem of continuous dependence of integrals depending on parameters. The crucial observation is that the curves $\gamma^\pm$ can be chosen independently of $n$, and simultaneously valid for the limit operator $L+K_*$. This is a consequence of the second step and the inclusions 
	\begin{equation}\label{eux21}
	\sigma(L+K_n)\subset[-R,R]\ \forall n\in\mathbb N\,,\qquad\text{ with }R=\sup_n\|L+K_n\|_{\mathscr L(X)}\,.
	\end{equation}
	\end{proof}
	
	Arguing as in \eqref{r1xc}-\eqref{eux21}, the compactness of $M$ allows us to find numbers $0<r_2<R_2$ such that 
	\begin{equation}\label{eux1}
		\sigma(L+K_m)\subset]-R_2,-r_2[\,\cup\,\{0\}\,\cup\,]r_2,R_2[,\qquad\text{ for every }m\in M\,.
	\end{equation}
	
	\begin{proof}[Proof of Proposition \ref{prop3}] 
				Since $M$ is a compact manifold of class $C^1$ and $\mathcal K(X)$ is a Banach space, a regularization argument shows the existence of a sequence 
		$$\widetilde K^{(n)}\in C^1(M,\mathcal K(X)),\qquad n=1,2,\ldots$$ converging uniformly to $K$ on $M$. While there is no guarantee that the operators $\widetilde K^{(n)}_m$ lie in $\mathcal K_{d}(X)$, a compactness argument shows the existence of some index, say $n_0\in\mathbb N$, such that, if  $n\geq n_0$ then 
		$$\pm r_2\not\in\sigma\big(L+\widetilde K^{(n)}_m\big)\,,\qquad\forall m\in M\,.$$
		
		Pick some continuously-differentiable, positively-oriented Jordan curve $\gamma\subset\mathbb C$ with $\gamma\cap\mathbb R=\{-r_2,r_2\}$ and such that its interior region $R_b(\gamma)$ satisfies
		$$\intt(\gamma)\cap\mathbb R=]-r_2,r_2[\,.$$
		Applying \eqref{proj} with $T=L-\widetilde K_m^{(n)}$ we see that for every $n\geq n_0$ and $m\in M$, the linear map
		\begin{equation}\label{Kmnx}
		\frac{1}{2\pi i}\int_{\gamma}\big(z\id_X-L-\widetilde K_m^{(n)}\big)^{-1}dz=:\Pi_{V_{n,m}}	
		\end{equation}
		is the orthogonal projection on some closed subspace $V_{n,m}\subset X$, and $V_{n,m}\to\ker(L+K_m)$ as $n\to+\infty$, uniformly with respect to $m\in M$. Thus, for every $n\geq n_0$ and $m\in M$, the map $K^{(n)}_m\in\mathscr L(X)$ defined by 
		$$K_m^{(n)}x:=\begin{cases}
			-Lx
			&\text{if }x\in{V_{n,m}}\\ 
			\widetilde K_m^{(n)}x&\text{if }x\in{V_{n,m}^\bot}\\ 
		\end{cases}$$
		belongs to $\mathcal K_d(X)$. Furthermore one checks that $K_m^{(n)}\to K_m$ as $n\to+\infty$, uniformly with respect to $m\in M$. 
		
		\medbreak
		
		To complete the proof we must check that the maps $K^{(n)}:M\to\mathscr L(X)$ defined as above are continuously differentiable. To this end we observe that 
		$$K^{(n)}_m=-L\circ\Pi_{V_{n,m}}+\widetilde K_m^{(n)}\circ(\id_X-\Pi_{V_{n,m}})\,,\qquad n\geq n_0,\ m\in M\,.$$
		For each $n\geq n_0$ the map $m\mapsto\Pi_{V_{n,m}}$ is continuously-differentiable as a consequence of \eqref{Kmnx} and the theorem of continuous dependence with respect to parameters. The result follows.
\end{proof}	

	\begin{proof}[Proof of Proposition \ref{lem212}] Pick some $r_2>0$ small enough so that \eqref{eux1} holds and set $R:=\max_{m\in M}\|K_m\|+1>r_2$. In this way one has
		$$\sigma(L+K_m)\subset]-R,-r_2[\,\cup\,\{0\}\,\cup\,]r_2,R[\qquad\text{for every }m\in M\,.$$
		
It allows us to fix continuously-differentiable, positively-oriented Jordan curves $\gamma^-,\gamma^0,\gamma^+\subset\mathbb C$ with $$\gamma^0\cap\sigma(L+K_m)=\emptyset=\gamma^\pm\cap\sigma(L+K_m)\qquad\text{ for every }m\in M\,,$$ and moreover,
		$$\intt(\gamma^0)\cap\sigma(L+K_m)\subset\{0\}\,,\quad\intt(\gamma^\pm)\cap\sigma(L+K_m)=\mathbb R^\pm\cap\sigma(L+K_m)\,,\qquad \forall m\in M\,.$$	
Applying \eqref{ker123} with $L+K_m$ in the place of $L$ we see that, for every $m\in M$,
			\begin{equation}\label{eq12}
				\Pi_{X^\pm_{K_m}}=\frac{1}{2\pi i}\int_{\gamma^\pm}\big(z\id_X-L-K_m\big)^{-1}dz\,,\quad \Pi_{X^0_{K_m}}=\frac{1}{2\pi i}\int_{\gamma^0}\big(z\id_X-L-K_m\big)^{-1}dz\,. 
			\end{equation}
			By the theorem of smooth dependence of integrals with respect to parameters, the maps $\Pi_{X^\pm_{K}}$, $\Pi_{X^0_{K}}:M\to\mathscr L(X)$ defined in this way are continuously-differentiable, thus giving rise to continuously-differentiable vector subbundles $\mathfrak X_K^\pm,\mathfrak X_K^0$ of $M\times X$. It completes the proof.  
		\end{proof}	
The Grassmannian family  $\mathcal G(X)$ of closed subspaces of the Hilbert space $X$ admits a pseudodistance $\delta$ which will play a role in the proof of Proposition \ref{corro}. For any $V_1,V_2\in\mathcal G(X)$ we define
\begin{equation*}
	\delta(V_1,V_2):=\sup_{v_1\in V_1,\ \|v_1\|\leq 1}\dist(v_1,V_2).\end{equation*}
In other words, $\delta(V_1,V_2)$ is the  $\mathscr L(V_1,X)$-norm of the restriction to $V_1$ of the orthogonal projection $\Pi_{V_2^\bot}:X\to V_2^\bot$. This  nonnegative function satisfies the triangle inequality, i.e. 
\begin{equation}\label{triangin}
\delta(V_1,V_3)\leq\delta(V_1,V_2)+\delta(V_2,V_3)\,,\qquad V_1,V_2,V_3\in\mathcal G(X)\,.
\end{equation}
To prove this fact it suffices to observe that for each $v_1\in V_1$ with $\|v_1\|\leq 1$ one has
$$\dist(v_1,V_3)\leq\|v_1-v_2\|+\dist(v_2,V_3)\leq\delta(V_1,V_2)+\delta(V_2,V_3)\,,$$
where $v_2=\Pi_{V_2}v_1$. 

\medbreak

Notice also that $\delta(V_1,V_2)=0\Leftrightarrow V_1\subset V_2$.
 In addition we claim that
\begin{equation}\label{idete}
	d(V_1,V_2)\leq\delta(V_1,V_2)+\delta(V_2,V_1)\leq2d(V_1,V_2)\,,\qquad V_1,V_2\in\mathcal G(X)\,.
	\end{equation}
The right equality is direct. In order to check the left one  we pick some $x\in X$ with $\|x\|\leq 1$ and decompose it as $x=x_1+x_1^\bot$ with $x_1=\Pi_{V_1}x\in V_1$ and $x_1^\bot\in V_1^\bot$. Then, 
$$\|\Pi_{V_2}x_1^\bot\|^2=\langle x_1^\bot,\Pi_{V_2}x_1^\bot\rangle=\langle x_1^\bot,(\id_X-\Pi_{V_1})\Pi_{V_2}x_1^\bot\rangle\leq\delta(V_2,V_1)\,\|\Pi_{V_2}x_1^\bot\|\,,$$
and this is to be combined with
$$\|(\Pi_{V_1}-\Pi_{V_2})x\|=\|x_1-\Pi_{V_2}x_1-\Pi_{V_2}x_1^\bot\|\leq\|(\id_X-\Pi_{V_2})x_1\|+\|\Pi_{V_2}x_1^\bot\|\,.$$ 
A curiosity arising from \eqref{idete} is that $\hat d(V_1,V_2):=\delta(V_1,V_2)+\delta(V_2,V_1)$ defines a new distance in $\mathcal G(X)$ which is equivalent to $d$.

\medbreak

In addition we point out that for any closed subspaces $V_1,V_2\in\mathcal G(X)$ with $\delta(V_1,V_2)<1$, the linear subspace $\widetilde V_1:=\Pi_{V_2}(V_1)$ is closed and satisfies
	\begin{equation}\label{dxv3}
d(V_1,\widetilde V_1)\leq\frac{2\delta(V_1,V_2)}{\sqrt{1-\delta(V_1,V_2)^2}}\,.	
\end{equation}
To show this we apply the Pythagoras theorem to obtain that, for each $v_1\in V_1$,
\begin{equation}\label{k312}\|\Pi_{\widetilde V_1}v_1\|=\|\Pi_{V_2}v_1\|=\sqrt{\|v_1\|^2-\|(\id_X-\Pi_{V_2})v_1\|^2}\geq\sqrt{1-\delta(V_1,V_2)^2}\ \|v_1\|\,.
	\end{equation}
In consequence $\Pi_{\widetilde V_1}$ defines a topological isomorphism from $V_1$ to $\widetilde V_1$. In particular $\widetilde V_1$ is complete and hence closed in $V_2$. Moreover, for every $\tilde v_1\in\widetilde V_1$ one has
\begin{equation*}
\dist(\tilde v_1, V_1)\leq\|\tilde v_1-v_1\|=\|\Pi_{V_2}v_1-v_1\|\leq\delta(V_1,V_2)\|v_1\|\leq\frac{\delta(V_1,V_2)}{\sqrt{1-\delta(V_1,V_2)^2}}\|\tilde v_1\|\,,
\end{equation*}
where $v_1\in V_1$ is chosen so that $\tilde v_1=\Pi_{\widetilde V_1}v_1$. Consequently we see that
$$\delta(\widetilde V_1,V_1)\leq \frac{\delta(V_1,V_2)}{\sqrt{1-\delta(V_1,V_2)^2}}\,,$$
and, since $\delta(V_1,\widetilde V_1)=\delta(V_1,V_2)\leq\delta(V_1,V_2)/\sqrt{1-\delta(V_1,V_2)^2}$, inequality \eqref{dxv3} now follows from the left side of \eqref{idete}.

\medbreak

Pick now converging sequences $V_n\to V$ and $W_n\to W$ in $\mathcal G(X)$. The triangle inequality \eqref{triangin} gives
\begin{equation*}
	\delta(V_n,W_n)\leq\delta(V_n,V)+\delta(V,W)+\delta(W,W_n)\leq d(V_n,V)+\delta(V,W)+d(W_n,W),
\end{equation*}
and so, {\em the function $\delta:\mathcal G(X)\times\mathcal G(X)\to\mathbb R$ is upper semicontinuous}. It follows that the set
$$\mathcal U:=\{(V,W):\delta(V,W)<1\}$$
is open in $\mathcal G(X)\times\mathcal G(X)$. In addition we have seen that $\Pi_{W}(V)\subset X$ is closed for every $(V,W)\in\mathcal U$, and so the rule $(V,W)\mapsto\Pi_W(V)$ defines a map $\Pi:\mathcal U\to\mathcal G(X)$. One further has the following:
\begin{lemma}\label{lem911} {$\Pi$ is continuous.}
	\begin{proof}
		To see this we pick some  converging sequence $(V_n,W_n)\to (V,W)$ in $\mathcal U$. In view of \eqref{idete} it suffices to check that
		\begin{equation}\label{eu3}
			\lim_{n\to+\infty}\delta(\Pi_{W_n}(V_n),\Pi_{W}(V))=0=\lim_{n\to+\infty}\delta(\Pi_{W}(V),\Pi_{W_n}(V_n))\,.
		\end{equation}
		Both limits being analogous, let us check the left one. The triangle inequality gives
		$$\delta(\Pi_{W_n}(V_n),\Pi_{W}(V))\leq\delta(\Pi_{W_n}(V_n),\Pi_{W}(V_n))+\delta(\Pi_{W}(V_n),\Pi_{W}(V))\,.$$
		It turns out that both terms in the right hand side converge to zero as $n\to\infty$. Concerning the first one we choose some sequence $\{w_n\}\subset X$ with $w_n=\Pi_{W_n}v_n\in\Pi_{W_n}(V_n)$, $\|w_n\|\leq 1$, and $v_n\in V_n$ for every $n$. Arguing as in \eqref{k312} we see that
		$$1\geq\|\Pi_{W_n}v_n\|=\sqrt{\|v_n\|^2-\|(\id_X-\Pi_{W_n})v_n\|^2}\geq\sqrt{1-\delta(V_n,W_n)^2}\,\|v_n\|\,,$$
		and recalling that $\delta$ is upper semicontinuous we deduce that the sequence $\|v_n\|$ is bounded. Consequently,
		$$\dist(w_n,\Pi_W(V_n))\leq\|w_n-\Pi_W v_n\|=\|\Pi_{W_n}v_n-\Pi_W v_n\|\leq d(W_n,W)\|v_n\|\to 0\,.$$

		\medbreak
		
		Concerning the second one we pick now some sequence $\{\bar w_n\}\subset W$ with $\bar w_n=\Pi_{W}\bar v_n\in\Pi_W(V_n)$ and $\|\bar w_n\|\leq 1$ for every $n$. As before we see that the sequence $\|\bar v_n\|$ should be bounded, and therefore
		$$\dist(\bar w_n,\Pi_W(V))\leq\|\bar w_n-\Pi_W(\Pi_V\bar v_n)\|\leq d(V_n,V)\|\bar v_n\|\to 0\,,$$
		where we have used the equality $\bar w_n-\Pi_W(\Pi_V\bar v_n)=\Pi_W(\Pi_{V_n}\bar v_n-\Pi_V\bar v_n)$. It concludes the argument.
		
	\end{proof} 
\end{lemma}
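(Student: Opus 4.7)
The plan is to take a convergent sequence $(V_n,W_n)\to(V,W)$ in $\mathcal U$ and establish $\Pi_{W_n}(V_n)\to\Pi_W(V)$ in the metric $d$. Since the equivalence \eqref{idete} shows that $d$ is equivalent to the symmetrized pseudodistance $\hat d(A,B):=\delta(A,B)+\delta(B,A)$, it suffices to prove the two one-sided convergences
\begin{equation*}
\delta(\Pi_{W_n}(V_n),\Pi_W(V))\to 0\qquad\text{and}\qquad\delta(\Pi_W(V),\Pi_{W_n}(V_n))\to 0\,.
\end{equation*}
Both are symmetric in spirit, so I would focus on the first and split it via the triangle inequality \eqref{triangin} as
\begin{equation*}
\delta(\Pi_{W_n}(V_n),\Pi_W(V))\leq\delta(\Pi_{W_n}(V_n),\Pi_W(V_n))+\delta(\Pi_W(V_n),\Pi_W(V))\,.
\end{equation*}

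For the mixed term $\delta(\Pi_{W_n}(V_n),\Pi_W(V_n))$, I would lift any unit $w_n=\Pi_{W_n}v_n$ (with $v_n\in V_n$) and compare it with the natural candidate $\Pi_W v_n\in\Pi_W(V_n)$, yielding
\begin{equation*}
\dist(w_n,\Pi_W(V_n))\leq\|w_n-\Pi_W v_n\|=\|(\Pi_{W_n}-\Pi_W)v_n\|\leq d(W_n,W)\,\|v_n\|\,.
\end{equation*}
For the second term $\delta(\Pi_W(V_n),\Pi_W(V))$, I would instead approximate each unit $\Pi_W\bar v_n\in\Pi_W(V_n)$ (with $\bar v_n\in V_n$) by $\Pi_W(\Pi_V\bar v_n)\in\Pi_W(V)$, obtaining the analogous bound $d(V_n,V)\,\|\bar v_n\|$ via the identity $\bar w_n-\Pi_W(\Pi_V\bar v_n)=\Pi_W(\Pi_{V_n}\bar v_n-\Pi_V\bar v_n)$.

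The main obstacle is therefore controlling the norms of the lifts $v_n$ and $\bar v_n$, since \emph{a priori} the pre-images under a projection of a bounded set may be unbounded. For $v_n\in V_n$ with $\|\Pi_{W_n}v_n\|\leq 1$, the Pythagorean identity combined with the definition of $\delta$ gives
\begin{equation*}
\|v_n\|^2=\|\Pi_{W_n}v_n\|^2+\|(\id_X-\Pi_{W_n})v_n\|^2\leq 1+\delta(V_n,W_n)^2\|v_n\|^2\,,
\end{equation*}
whence $\|v_n\|\leq(1-\delta(V_n,W_n)^2)^{-1/2}$. This bound stays uniform provided $\delta(V_n,W_n)$ is eventually bounded away from $1$. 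Here I would invoke the upper semicontinuity of $\delta:\mathcal G(X)\times\mathcal G(X)\to\mathbb R$ (established just before the lemma from \eqref{triangin} and the inequality $\delta\leq d$) together with the openness condition $\delta(V,W)<1$ built into the domain $\mathcal U$. The same argument controls $\|\bar v_n\|$, and combining the two estimates concludes the proof.
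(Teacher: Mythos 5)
Your proposal follows the paper's proof essentially step for step: the same reduction via \eqref{idete} to two one-sided $\delta$-limits, the same triangle-inequality split into a mixed term and a pure term, the same choice of comparison vectors $\Pi_W v_n$ and $\Pi_W(\Pi_V\bar v_n)$, and the same Pythagoras-plus-upper-semicontinuity argument to keep the lifts $\|v_n\|$, $\|\bar v_n\|$ uniformly bounded. No substantive differences.
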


As a last step before going into the proof of Proposition \ref{corro} we recall a well-known property of relatively compact subsets $A$ of the Hilbert space $X$. It concerns the uniform decay along $A$ of the tails in the series expansions with respect to any orthonormal basis. More precisely, if $\{e_1,e_2,\ldots\}$ is a Hilbert basis of $X$ then
\begin{equation}\label{re}
\lim_{n\to+\infty}\left\|\sum_{j=n+1}^{+\infty}\langle a,e_j\rangle e_j\right\|=0\,,\qquad \text{uniformly with respect to }a\in A\,.
\end{equation}
	\begin{proof}[Proof of Proposition \ref{corro}]  Recalling \eqref{eu12} we pick some Hilbert basis $\{e_1,e_2,\ldots\}$ of $X$ in such a way that all $e_i$'s are eigenvectors of $L$, the first $d:=\dim(\ker L)$ corresponding to the eigenvalue $0$ and the remaining ones associated to some eigenvalue $\lambda_n=\pm 1$. For each $n\geq d$ we denote by $X_n$ the $n$-dimensional, $L$-forward invariant subspace generated by $e_1,\ldots,e_{n}$. The set			$A:=\{K_mx:m\in M,\ \|x\|\leq 1\}$
is relatively compact, and so \eqref{re} gives
\begin{equation}\label{re22}
\|\Pi_{X_n^\bot}\circ K_m\|_{\mathscr L(X)}\leq\sup_{a\in A}\|\Pi_{X_n^\bot}a\|\to 0\qquad\text{as }n\to+\infty,\ \text{unif. w.r.t. }m\in M\,.
\end{equation}
The remaining of the proof will be divided into seven steps:

		\medbreak

		{\em First Step: $\lim_{n\to\infty}\delta(X_{K_m}^0,X_n)=0$, uniformly with respect to $m\in M\,.$}

		\medbreak
		
		To check this assertion we pick some point $m\in M$, some $x\in X_{K_m}^0$ with $\|x\|\leq 1$, and some  $n\geq d$. Then
	\begin{equation*}
	\dist(x,X_n)=\|\Pi_{X_n^\bot}x\|=\|(L\circ\Pi_{X_n^\bot}\circ L)x\|\leq\|(\Pi_{X_n^\bot}\circ L)x\|=\|(\Pi_{X_n^\bot}\circ K_m)x\| \,,
	\end{equation*}
where we have used the equalities $$\Pi_{X_n^\bot}=L^2\circ\Pi_{X_n^\bot}=L\circ\Pi_{X_n^\bot}\circ L,\qquad \|L\|_{\mathscr L(X)}=1,\qquad Lx=-K_mx\,.$$ Consequently $\delta(X_{K_m}^0,X_n)\leq\|\Pi_{X_n^\bot}\circ K_m\|_{\mathscr L(X)}$, and the result follows from \eqref{re22}.

\medbreak

For every $n\geq d$ and $m\in M$ we observe that $X_n=F_{n,m}^0\oplus U_{n,m}$ (orthogonal direct sum), where
$$F_{n,m}^0:=\Pi_{X_n}(X^0_{K_m})\,,\qquad U_{n,m}:=(F_{n,m}^0)^\bot\cap X_n\,.$$
Moreover, the combination of the first step and \eqref{dxv3} gives			
\begin{equation}\label{ex51}
\lim_{n\to\infty}d(X_{K_m}^0,F_{n,m}^0)=0\,,\qquad\text{uniformly with respect to }m\in M\,.
\end{equation}
For each $n\geq d$ and $m\in M$ we consider the linear, selfadjoint endomorphism $T_{n,m}:X_n\to X_n$ defined by 
\begin{equation}\label{Ta1}
T_{n,m}\,x:=\begin{cases}	0,&\text{if }x\in F_{n,m}^0\,,\\
\big[\Pi_{U_{n,m}}\circ(L+K_m)\big]x\,,&\text{if } x\in U_{n,m}\,.
		\end{cases}
		\end{equation}

\medbreak

{\em Second Step: $\lim_{n\to\infty}\left\|(T_{n,m}-(L+K_m))_{|X_n}\right\|_{\mathscr L(X_n,X)}=0$, uniformly with respect to $m\in M$.}

\medbreak

In order to check this new assertion pick $n\geq d$, $m\in M$ and $x\in X_n$ with $\|x\|\leq 1$. Then $T_{n,m}x-(L+K_m)x=\xi+\eta+\zeta$, where
$$\begin{cases}
	\xi=-(L+K_m)(\Pi_{F_{n,m}^0}x)=-(L+K_m)(\Pi_{F_{n,m}^0}-\Pi_{X_{K_m}^0})x\\
\zeta=-\Pi_{F_{n,m}^0}(L+K_m)(\Pi_{U_{n,m}}x)=(\Pi_{X_{K_m}^0}-\Pi_{F_{n,m}^0})(L+K_m)(\Pi_{U_{n,m}}x)\,,\\
\eta=-\Pi_{X_n^\bot}(L+K_m)(\Pi_{U_n,m}x)=-\Pi_{X_n^\bot}K_m(\Pi_{U_{n,m}}x)\,,
\end{cases}$$
All three vectors $\xi,\zeta,\eta$ go to zero as $n\to+\infty$ in a uniform fashion. In the case of $\xi$ and $\zeta$ it follows from \eqref{ex51}, for $\eta$ we apply again \eqref{re22}.

\medbreak

{\em Third Step: There exists some $n_0\geq d$ such that $\ker T_{n,m}=F_{n,m}^0$ for all $n\geq n_0$ and $m\in M$.}

\medbreak

This time we argue by contradiction and assume the existence of sequences 
$$n_k\to+\infty,\qquad m_k\in M,\qquad x_k\in U_{n_k,m_k}\,,$$
with $\|x_k\|=1$ and $T_{n_k,m_k}x_k=0$ for every $k$. From the second step we know that 
$(L+K_{m_k})x_k\to 0$
and from \eqref{ex51} we see that the sequence $y_k:=x_k-\Pi_{X_{K_{m_k}}^0}x_k$ satisfies
$$\|y_k-x_k\|=\big\|(\Pi_{X_{K_{m_k}}^0}-\Pi_{F_{n_k,m_k}^0})x_k\big\|\leq d(X_{K_{m_k}}^0,F_{n_k,m_k}^0)\to 0\qquad\text{ as }k\to+\infty\,.$$
It then follows that 
$$y_k\bot X_{K_{m_k}}^0,\quad\|y_k\|\to 1,\qquad\quad (L+K_{m_k})y_k\to 0\quad\text{as }k\to+\infty\,.$$

By a compactness argument, after possibly passing to a subsequence we may assume $y_k\to y_*$ and $m_k\to m_*$. The continuity of the fiber bundle $X_K^0$ implies that $y_*$ is orthogonal to $X_{K_{m_*}}^0$ and this is not compatible with $$y_*\not=0,\qquad (L+K_{m_*})y_*=0\,.$$

\medbreak

For each $n\geq n_0$ and $m\in M$ we extend $T_{n,m}:X_n\to X_n$ to a linear, continuous, Fredholm and selfadjoint operator $\widehat T_{n,m}:X\to X$ by setting
$$\widehat T_{n,m}x:=\begin{cases}
	T_{n,m}x,&\text{if }x\in X_{n}\,,\\
	Lx+\Pi_{X_n^\bot}K_mx\,,&\text{if }x\in X_n^\bot\,.
\end{cases}$$

{\em Fourth Step: $\widehat T_{n,m}\to L+K_m$ in $\mathscr L(X)$ as $n\to+\infty$, uniformly with respect to $m\in M$.}

\medbreak

In order to check this new assertion we decompose the difference $$\widehat T_{n,m}-(L+K_m)=\big[\widehat T_{n,m}-(L+K_m)\big]\circ\id_X$$ according to the identity
$$\id_X=\Pi_{X_n}+\Pi_{X_n^\bot}\,,$$
and analyze the composition with each of the projections. For the first projection one has:
$$\big[\widehat T_{n,m}-(L+K_m)\big]\circ\Pi_{X_n}= \big[T_{n,m}-(L+K_m)_{|X_n}\big]\circ\Pi_{X_n}\,,$$
and so it goes to zero as $n\to+\infty$ and uniformly with respect to $m\in M$, by the second step. On the other hand, concerning composition with the second projection one has
$$\big[\widehat T_{n,m}-(L+K_m)\big]\circ\Pi_{X_{n}^\bot}=-\Pi_{X_n}\circ K_m\circ\Pi_{X_n^\bot}$$ 
and, consequently,
$$\left\|\big[\widehat T_{n,m}-(L+K_m)\big]\circ\Pi_{X_{n}^\bot}\right\|_{\mathscr L(X)}\leq\| K_m\circ\Pi_{X_n^\bot}\|_{\mathscr L(X)}=\|\Pi_{X_n^\bot}\circ K_m\|_{\mathscr L(X)}\,,$$ 
where we have used that adjoint operators have the same norm and $(K_m\circ \Pi_{X_n^\bot})^*=\Pi_{X_n^\bot}\circ K_m$. The result now follows from \eqref{re22}.

 \medbreak

Going back to the third step we see that for every $n\geq n_0$ and $m\in M$ one has the orthogonal splitting
$$X_n=U_{n,m}^-\oplus F_{n,m}^0\oplus U_{n,m}^+\,,$$
the subspaces $U^\pm_{n,m}$ being $T_{n,m}$-invariant, with $T_{n,m}$ being negative-definite on  $U^-_{n,m}$ and positive-definite on  $U^+_{n,m}$.

\medbreak

On the other hand, the combination of the fourth step and \eqref{eux1} ensures the existence of  constants $0<r_2<R_2$ such that for $n$ big enough, say $n\geq n_1$, one further has
  $$\sigma(\widehat T_{n,m})\subset]-R_2,-r_2[\,\cup\,\{0\}\,\cup\,]r_2,R_2[,\qquad\text{ for every }m\in M\,.$$
 
 Pick some 
 continuously-differentiable, positively-oriented Jordan curve $\gamma^+\subset\mathbb C\backslash\{0\}$ with $]r_2,R_2[\subset\intt(\gamma^+)\cap\mathbb R\subset]0,+\infty[$. Since $\sigma(T_{n,m})\subset\sigma(\widehat T_{n,m})$ we see that $\sigma(T_{n,m})\cap\intt(\gamma^+)=\sigma(T_{n,m})\cap]0,+\infty[$,  and so \eqref{ker123} with $T_{n,m}:X_n\to X_n$ in the place of $L$ gives
 \begin{equation}\label{ker1231}
 {\Pi_{U_{n,m}^+}}_{\big|X_n}=\frac{1}{2\pi i}\int_{\gamma^+}(z\id_{X_n}-T_{n,m})^{-1}dz\,,\qquad n\geq n_1,\ m\in M\,.
 \end{equation}
 
{\em Fifth Step: $\lim_{n\to\infty}\delta(U^\pm_{n,m},X_{K_m}^\pm)=0$, uniformly with respect to $m\in M$.}

\medbreak

We consider for instance the case of $U^+_{n,m}$ and $X_{K_m}^+$, and with this aim we pick $m\in M$, $n\geq n_1$, and $x\in U_{n,m}^+$ with $\|x\|\leq 1$. Combing \eqref{eq12} and \eqref{ker1231} with the equality $\Pi_{U_{n,m}^+}x=x$ we see that
$$\left\|\Pi_{X_{K_m}^+}x-x\right\|\leq\frac{\ell(\gamma_+)}{2\pi}\max_{z\in\gamma^+}\big\|(z\id_{X}-(L+K_m))^{-1}-(z\id_{X}-\widehat T_{n,m})^{-1}\big\|_{\mathscr L(X)}\,,$$
where $\ell(\gamma_+):=\int_0^{2\pi}\|\dot\gamma_+(t)\|dt$ stands for the length of $\gamma_+$.
By the fourth step above, the right hand side goes to $0$ as $n\to+\infty$, uniformly with respect to $m\in M$. It completes the proof of this step.

\medbreak

Combining the fifth step and \eqref{dxv3} we see that the subspaces $Y_{n,m}^\pm:=\Pi_{X^\pm_{K_m}}\big(U^\pm_{n,m}\big)$ satisfy
\begin{equation}\label{xaq1}
	\lim_{n\to\infty}d(U^\pm_{n,m},Y_{n,m}^\pm)=0\,,\qquad\text{uniformly with respect to }m\in M\,.
\end{equation}

\medbreak

{\em Sixth Step: for $n$ big enough, the maps $$Y_n^\pm:M\to\mathcal G(X),\qquad m\mapsto Y_{n,m}^\pm\,,$$ are continuous.}

\medbreak

In order to check this assertion we start by observing that the map 
$$M\to\mathcal G(X)\,,\qquad m\mapsto X^0_{K_m}\,,$$
is continuous (by Proposition 2). On the other hand, the first step ensures that for $n$ big enough one has $\delta(X_{K_m}^0,X_n)<1$ for every $m\in M$. Then Lemma \ref{lem911} implies that the map 
$$M\to\mathcal G(X),\qquad m\mapsto F_{n,m}^0\,,$$
is continuous for $n$ big enough. 
\medbreak

Going back to \eqref{Ta1} we see that
$T_{n,m}=(\id_{X_n}-\Pi_{F_{n,m}^0})\circ(L+K_m)\circ(\id_{X_n}-\Pi_{F_{n,m}^0})$. Thus, for $n$ big enough the map  
$$M\to\mathscr L(X_n),\qquad m\mapsto T_{n,m}\,,$$
is also continuous. Recalling \eqref{ker1231} we see that now that the maps
$$U_n^\pm:M\to\mathcal G(X)\,,\qquad m\mapsto U_{n,m}^\pm$$
are continuous for $n$ big enough. The step follows by combining the fifth step and Lemma \ref{lem911}.

\medbreak

As a consequence of this step we see that the sets $$\mathfrak Y_n^\pm:=\{(m,y):m\in M,\ y\in Y_{n,m}^\pm\}$$ are topological vector subbundles of $\mathfrak X_K^\pm$. Observe also that $X_n=U^-_{n,m}\oplus F^0_{n,m}\oplus U^+_{n,m}$ the direct sum being further orthogonal, and in view of \eqref{ex51}-\eqref{xaq1} and \eqref{oplus} the orthogonal sum $Y_{n,m}:=Y_{n,m}^-\oplus X^0_{K_m}\oplus Y_{n,m}^+$ satisfies
\begin{equation}\label{xaq11}
	\lim_{n\to\infty}d(Y_{n,m},X_n)=0\,,\qquad\text{uniformly with respect to }m\in M\,.
\end{equation}

\medbreak

{\em Seventh Step: the end of the proof.}

\medbreak 

At this moment we recall that the subspaces $X_n$ are $L$-forward invariant, in the sense that $L(X_n)\subset X_n$ for every $n$. Thus, also  $e^{sL}(X_n)\subset X_n$ for all $s\in\mathbb R$, but each operator $e^{sL}$ is a topological isomorphism and so $e^{sL}(X_n)=X_n$. In view of \eqref{xaq11} the subspaces  $Y_{n,m}$ are close to $X_n$ for $n$ big enough. Therefore $e^{sL}[Y_{n,m}]$ is also close to $X_n$ if $n$ is big, and this assertion holds uniformly with respect to $s$ in compact intervals of the real line and $m\in M$. It proves assertion {\em (i)} of the Proposition. Similarly, the remaining second statement follows from \eqref{xaq11} and the fact that the increasing sequence $X_n$ has dense union in $X$. Again we use \eqref{re}. It completes the proof.
\end{proof}
\section{Conflict of interests}
On behalf of all authors, the corresponding author states that there is no conflict of interest.

\medbreak

The authors declare that the data supporting the findings of this study are available within the paper.

\end{document}